\documentclass[11pt,a4paper,reqno]{amsart}

 \usepackage{amsfonts,amsmath,amscd,amssymb,amsbsy,amsthm,amstext,amsopn}
 \usepackage{fullpage,mathrsfs,subfigure}
 \usepackage{pstricks,pst-node,pst-text,pst-tree,multicol}
   \usepackage{epic}
 \usepackage{stmaryrd}  
 \usepackage{extarrows}
\usepackage[all]{xy}

  \usepackage{float}

 \numberwithin{equation}{section}
 \addtolength{\topmargin}{5mm}
 \addtolength{\textheight}{-15mm}

\newtheorem{theorem}{Theorem}[section]
\newtheorem{proposition}[theorem]{Proposition}
\newtheorem{lemma}[theorem]{Lemma}

\theoremstyle{definition}
\newtheorem{definition}[theorem]{Definition}
\newtheorem{example}[theorem]{Example}

\theoremstyle{remark}
\newtheorem{remark}[theorem]{Remark}

\def\gbf#1{\mbox{\boldmath$#1$}} 
 
\newcommand{\kk}{\ensuremath{\Bbbk}} 
\newcommand{\CC}{\ensuremath{\mathbb{C}}}
\newcommand{\NN}{\ensuremath{\mathbb{N}}}

\newcommand{\RR}{\ensuremath{\mathbb{R}}} 
\newcommand{\ZZ}{\ensuremath{\mathbb{Z}}} 
\newcommand{\one}{\ensuremath{(\mathrm{i})}}
\newcommand{\two}{\ensuremath{(\mathrm{ii})}}
\newcommand{\three}{\ensuremath{(\mathrm{iii})}}
\newcommand{\four}{\ensuremath{(\mathrm{iv})}}

\renewcommand{\div}{\operatorname{div}} 

\newcommand{\ghilb}{\ensuremath{G}\operatorname{-Hilb}}

\newcommand{\lcm}{\operatorname{lcm}}

\newcommand{\sign}{\operatorname{sign}}

\newcommand{\supp}{\operatorname{supp}}

 \newcommand{\Cl}{\operatorname{Cl}}

\newcommand{\Hom}{\operatorname{Hom}} 
\newcommand{\im}{\operatorname{im}}

\newcommand{\Pic}{\operatorname{Cl}}

\newcommand{\SL}{\operatorname{SL}}

\newcommand{\syz}{\operatorname{syz}}

\title{Cohomology of wheels on toric varieties}
\thanks{MSC 2010: Primary 14M25, 05E40;  Secondary 05C20, 13P10, 13D02}

 \author{Alastair Craw} 
 \address{Department of Mathematical Sciences\\ University of Bath,\\ Bath BA2 7AY\\ UK}
 \email{A.Craw@bath.ac.uk}
  \author{Alexander Quintero V\'{e}lez}
 \address{Instituto de Matem\'{a}ticas, Universidad de Antioquia, Calle 67 No. 53--108, Medell\'{i}n, Colombia}
 \email{A.QuinteroVelez@ciencias.udea.edu.co} 

\begin{document}
\bibliographystyle{plain}

 \begin{abstract}
 We describe explicitly the cohomology of the total complex of certain diagrams of invertible sheaves on normal toric varieties. These diagrams, called wheels, arise in the study of toric singularities associated to dimer models. Our main tool describes the generators in a family of syzygy modules associated to the wheel in terms of walks in a family of graphs. 
 \end{abstract}

 \maketitle

 \section{Introduction}
A standard tool in homological algebra is to study a finitely generated module over a ring in terms of a free resolution, or more generally, a coherent sheaf on a variety in terms of a resolution by locally free sheaves. Conversely, given a complex $T^\bullet$ of locally free sheaves on a variety $X$, it is natural to ask whether the cohomology of the complex is nonzero in one degree only, say $k\in \ZZ$, in which case $T^\bullet$ is quasi-isomorphic to the pure sheaf $H^k(T^\bullet)[-k]$.  In particular, it is important to have an explicit understanding of the cohomology sheaves of a complex of locally free sheaves. Our main result achieves this for a class of four-term complexes of locally free sheaves on normal toric varieties. 

Our motivation comes from the study of derived categories of toric varieties associated to consistent dimer model algebras (see Bocklandt--Craw--Quintero-V\'{e}lez~\cite[Section~2.4]{BCQ12} for a brief introduction). The best-known example of a consistent dimer model algebra is the skew group algebra $\CC[x,y,z]*G$ for a finite abelian subgroup $G\subset \SL(3,\CC)$, in which case the relevant toric variety is the $G$-Hilbert scheme $X=\ghilb(\CC^3)$ introduced by Nakamura~\cite{Nakamura01}. In their study of the equivalence of derived categories induced by the universal family on the $G$-Hilbert scheme, Cautis--Logvinenko~\cite{CL09} describes explicitly the cohomology sheaves of certain four-term complexes $T^\bullet$ on $X$ and hence shows that with only one exception, every such complex is quasi-isomorphic to a pure sheaf $H^k(T^\bullet)[-k]$ for $k=0,1$ (see also Cautis--Craw--Logvinenko~\cite{CCL12}). Our main result (see Theorem~\ref{thm:mainintro} below) can be applied to a broader class of four-term complexes, including those arising in the study of the derived equivalences induced by the universal family of fine moduli spaces $X$ associated to any consistent dimer model algebra. As an application, joint work with Raf Bocklandt~\cite{BCQ12} establishes the dimer model analogue of the Cautis-Logvinenko result, namely, that for a special choice of moduli space generalising the $G$-Hilbert scheme, all but one of the four-term complexes $T^\bullet$ on $X$ obtained from the derived equivalence is quasi-isomorphic to a pure sheaf $H^k(T^\bullet)[-k]$ for $k=0,1$.

The complexes $T^\bullet$ that we consider in this paper are four-term complexes of the form 
\begin{equation}
\label{eqn:Tbullet}
L \xlongrightarrow{d^3}  \bigoplus_{j=1}^m L_{j,{j+1}} \xlongrightarrow{d^2} \bigoplus_{j=1}^m L_j \xlongrightarrow{d^1} L
\end{equation}
for some $m\geq 2$, where $L$, $L_{j,j+1}$ and $L_{j}$ ($1\leq j\leq m$) are invertible sheaves on any normal toric variety $X$, where each differential is equivariant with respect to the torus-action on $X$, and where the right-hand copy of $L$ lies in degree zero. Assume in addition that for $1\leq j\leq m$, the restriction of the differential $d^2$ to the summand $L_{j,j+1}$ has image in $L_j\oplus L_{j+1}$ (with indices modulo $m$). This means that if we separate vertically the summands in the terms of $T^\bullet$ and hence break the matrices defining the differentials into their constituent maps between summands, the complex can be presented as a diagram of the form 
\begin{equation}
\label{eqn:diagram}
\begin{split}
    \centering    
         \psset{unit=0.45cm}
     \begin{pspicture}(0,-1)(25,13.7)
\cnodeput*(0,6){A}{$L$} 
\cnodeput*(8,12){B}{$L_{1,2}$}
\cnodeput*(8,9){C}{$L_{2,3}$} 
\cnodeput*(8,6){D}{$L_{3,4}$}
\cnodeput*(8,3.2){S}{$\vdots$}
\cnodeput*(8,0){E}{$L_{m,1}$}
\cnodeput*(18,12){F}{$L_{1}$}
\cnodeput*(18,9){G}{$L_{2}$}
\cnodeput*(18,6){H}{$L_{3}$}
\cnodeput*(18,3.2){T}{$\vdots$}
\cnodeput*(18,0){I}{$L_{m}$}
\cnodeput*(26,6){J}{$L.$}
\psset{nodesep=1pt}
   \ncline{->}{A}{B}\lput*{:U}(0.6){$\scriptstyle{D_{1,2}}$}
   \ncline{->}{A}{C}\lput*{:U}(0.6){$\scriptstyle{D_{2,3}}$}
   \ncline{->}{A}{D}\lput*{:U}(0.6){$\scriptstyle{D_{3,4}}$}
   \ncline{->}{A}{E}\lput*{:U}(0.6){$\scriptstyle{D_{m,1}}$}
 \ncline{->}{B}{F}\lput*{:U}(0.4){$\scriptstyle{D^2_1}$}
  \ncline{->}{B}{G}\lput*{:U}(0.4){$\scriptstyle{D^1_2}$}
  \ncline{->}{C}{G}\lput*{:U}(0.4){$\scriptstyle{D^3_2}$}
  \ncline{->}{C}{H}\lput*{:U}(0.4){$\scriptstyle{D^2_3}$}
\ncline{->}{D}{H}\lput*{:U}(0.4){$\scriptstyle{D^4_3}$}
\ncline{->}{E}{I}\lput*{:U}(0.4){$\scriptstyle{D^1_m}$}
\nccurve[angleA=-40,angleB=140]{->}{E}{F}\lput*{:U}(0.4){$\scriptscriptstyle{D^m_1}$}
 \ncline{->}{F}{J}\lput*{:U}(0.4){$\scriptstyle{D^{1}}$}
   \ncline{->}{G}{J}\lput*{:U}(0.4){$\scriptstyle{D^{2}}$}
   \ncline{->}{H}{J}\lput*{:U}(0.4){$\scriptstyle{D^{3}}$}
   \ncline{->}{I}{J}\lput*{:U}(0.4){$\scriptstyle{D^{m}}$}
       \end{pspicture}
 \end{split}
 \end{equation}
 The maps between invertible sheaves in this diagram are multiplication by a torus-invariant section of an invertible sheaf on $X$. We illustrate this and fix notation by writing on each arrow in diagram \eqref{eqn:diagram} the Cartier divisor of zeros of the corresponding section so, for example, the effective divisor $D^1_{2}\in H^0(L_2\otimes L_{1,2}^{-1})\cong \Hom(L_{1,2},L_2)$ denotes the Cartier divisor of zeros of the section that defines the map from $L_{1,2}$ to $L_2$. This diagram can be represented equally well in a planar picture that is reminiscent of a bicycle wheel (see Figure~\ref{fig:wheel} in Section~\ref{sec:cohomologyWheels}), and we refer to any such four-term complex $T^\bullet$ as a `wheel' on $X$. 

 To state our main result we choose once and for all a rather special order on the set of transpositions of $m$ letters (see Section~\ref{sec:syzygies}), giving $\tau_1= (\mu_1,\nu_1),\dots, \tau_n=(\mu_n,\nu_n)$ where $n= \binom{m}{2}$ and $\mu_k<\nu_k$ for $1\leq k\leq n$. In addition, for every index $1\leq k\leq n$ we define a subscheme $Z_k\subset X$ to be the scheme-theoretic intersection of certain torus-invariant divisors in $X$. To be more precise, let $\mathscr{D}:=\{D_{\lambda}\}_{\lambda\in \Lambda}$ be a set of torus-invariant divisors in $X$. Define the greatest common divisor and the least common multiple of the set $\mathscr{D}$ to be the torus-invariant divisors  
\[
\gcd(\mathscr{D}) = \max \{ D \mid D_\lambda-D\geq 0 \;\forall \;\lambda\in \Lambda\}\quad \text{and}\quad \lcm(\mathscr{D}) = \min \{ D \mid D-D_\lambda\geq 0 \;\forall \;\lambda\in \Lambda\}
\]
 respectively; here max/min means choose the maximal/minimal values for the coefficients of each prime divisor in the expression for $D$. Define subschemes $Z_k \subset X$ for $1 \leq k \leq n$ in terms of the Cartier divisors labelling the arrows in diagram \eqref{eqn:diagram} as follows:
\begin{enumerate}
\item[\one] for $1\leq k\leq m$, define $Z_k$ to be the scheme-theoretic intersection of $\gcd(D_{k+1}^k,D^{k+1}_k)$ and the divisor $\lcm\big(D^1,\dots,D^m,\gcd(D_{k+2}^{k+1},D^{k+2}_{k+1}),\dots,\gcd(D_{1}^{m},D^{1}_{m})\big)-\lcm(D^k,D^{k+1})$;
\item[\two] for $m+1 \leq k \leq 2m-3$, define $Z_k$ to be the scheme-theoretic intersection of the divisors $\lcm(D^1,D^{\nu_k},D^{\nu_k+1},\dots,D^m)-\lcm(D^{1},D^{\nu_k})$ and $\lcm(D^1,D^{\nu_k-1},D^{\nu_k})-\lcm(D^{1},D^{\nu_k})$;
\item[\three] for $2m-2\leq k \leq n$, define $Z_k$ to be the scheme-theoretic intersection of the divisors $\lcm(D^{\mu},D^{\mu_k},D^{\nu_k})-\lcm(D^{\mu_k},D^{\nu_k})$ for $\mu \in \{1, \dots, \mu_k-1\}\cup\{\nu_k-1\}$.
\end{enumerate}
 The subschemes $Z_k\subset X$ are torus-invariant, though some (possibly all) may be empty, see Example~\ref{exa:hex} for an explicit calculation. 
 
\begin{theorem}
\label{thm:mainintro}
Let $X$ be a normal toric variety and let $T^\bullet$ be the complex from \eqref{eqn:Tbullet}, with differentials determined by the Cartier divisors shown in \eqref{eqn:diagram}. Then:
\begin{enumerate}
\item[(1)]$H^0(T^{\bullet}) \cong \mathscr{O}_Z \otimes L$ where $Z$ is the scheme-theoretic intersection of $D^1,\dots,D^m;$
\item[(2)]$H^{-1}(T^{\bullet})$ has an $n$-step filtration 
\[
\im(d^2)=F^0 \subseteq F^1\subseteq \cdots \subseteq F^{n-1}\subseteq F^n=\ker(d^1)
\]
 where, for $1\leq k \leq n$ and for the permutation $\tau_k=(\mu_k,\nu_k)$, we have
\begin{equation}
\label{eqn:sheafquotient1intro}
F^k/F^{k-1}\cong \mathscr{O}_{Z_k} \otimes L_{\mu_k}\otimes
L_{\nu_k}\otimes L^{-1}(\gcd(D^{\mu_k},D^{\nu_k}));
\end{equation}
\item[(3)]$H^{-2}(T^{\bullet}) \cong \mathscr{O}_D \otimes L(D)$ where $D=\gcd(D_{1,2},D_{2,3},\dots,D_{m,1});$
\item[(4)]$H^{-3}(T^{\bullet})\cong 0$.
\end{enumerate}
\end{theorem}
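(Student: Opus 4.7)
The strategy is to handle the four cohomology groups separately, in roughly increasing order of difficulty.

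The extremes \textbf{(4)} and \textbf{(1)} are essentially immediate. For (4), each component $L\to L_{j,j+1}$ of $d^3$ is multiplication by a nonzero global section of an invertible sheaf on the integral scheme $X$ and hence injective, so $H^{-3}(T^\bullet)=\ker d^3=0$. For (1), the image of $d^1$ is locally the ideal in $L$ generated by sections vanishing on $D^1,\dots,D^m$; hence $\im d^1=L\otimes\mathscr{I}_Z$ and $H^0(T^\bullet)\cong L\otimes \mathscr{O}_Z$.

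For part \textbf{(3)}, I would factor $d^3=\iota\circ\pi$, where $\pi\colon L\hookrightarrow L(D)$ is multiplication by the canonical section of $\mathscr{O}(D)$ with $D=\gcd(D_{1,2},\dots,D_{m,1})$, and $\iota\colon L(D)\to \bigoplus_j L_{j,j+1}$ has components with divisors $D_{j,j+1}-D$ whose gcd is zero. Since $\iota$ is again injective, the claim reduces to showing $\im\iota=\ker d^2$, which is a local computation in each torus-invariant affine chart using the wheel relations (cancellation in each $L_j$ after factoring out the common divisor $D$). Combined with the short exact sequence
\begin{equation*}
0\longrightarrow L\xlongrightarrow{\pi} L(D)\longrightarrow L(D)\otimes \mathscr{O}_D\longrightarrow 0,
\end{equation*}
this identifies $H^{-2}(T^\bullet)$ with $\mathscr{O}_D\otimes L(D)$.

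Part \textbf{(2)} is the substance of the theorem. The plan is to exploit the chosen ordering $\tau_1,\dots,\tau_n$ of transpositions to construct the filtration inductively: at stage $k$, one augments $F^{k-1}$ by a distinguished syzygy of the sections $D^1,\dots,D^m$ associated to the pair $(\mu_k,\nu_k)$. This syzygy should naturally represent a global section of $L_{\mu_k}\otimes L_{\nu_k}\otimes L^{-1}\bigl(\gcd(D^{\mu_k},D^{\nu_k})\bigr)$, and the three cases \one, \two, \three{} in the definition of $Z_k$ are expected to correspond to three regimes of interaction between the new syzygy and its predecessors (adjacent pairs on the wheel, pairs sharing the index $1$, and generic pairs).

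The main obstacle is the identification of the successive quotient $F^k/F^{k-1}$ with $\mathscr{O}_{Z_k}$ tensored with the appropriate invertible sheaf. This will require a detailed understanding of the syzygy modules on each torus-invariant chart, which is precisely where the \emph{walks in a family of graphs} advertised in the abstract should enter: they furnish a combinatorial description of the syzygy generators, permitting a stalk-by-stalk verification. I expect the three cases to be treated separately, with the adjacent case $k\leq m$ serving as the base for inductive arguments in the remaining two; checking that the $k$-th syzygy is killed on exactly the subscheme $Z_k$ modulo earlier relations will be the most technical point.
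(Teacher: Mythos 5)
Parts (1) and (4) are fine and match the paper's one-line arguments. For part (3) your factorisation $d^3=\iota\circ\pi$ through $L(D)$ is exactly the paper's strategy, but the step you defer to ``a local computation in each torus-invariant affine chart'' is where the real content sits, and doing it chart-by-chart is risky: the local rings of a normal toric variety are not UFDs in general, whereas the cancellation you need (from $u_{j-1}f_{j,j+1}=u_jf_{j-1,j}$ and $\gcd(f_{1,2},\dots,f_{m,1})=1$ deduce that $f_{j,j+1}$ divides $u_j$) is a unique-factorisation argument. The paper avoids this by lifting the whole complex to the Cox ring $S$ (a polynomial ring) via Cox's exact functor and running the divisibility argument there once, globally; some version of that device, or an extra argument on charts, is needed to make your sketch of (3) into a proof.

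The genuine gap is part (2), which in your proposal is a plan rather than an argument: every assertion after ``the plan is to exploit the chosen ordering'' is a restatement of what must be proved. Concretely, you are missing (a) the reduction of the sheaf-theoretic statement to commutative algebra by lifting $T^\bullet$ to the complex \eqref{eqn:complexSmods1} of $\Cl(X)$-graded $S$-modules; (b) the fact that $\ker(\varphi^1)$ is generated by the elements $\gbf{\beta}_{(\mu,\nu)}=\frac{\lcm(f^\mu,f^\nu)}{f^\nu}\mathbf{e}_\nu-\frac{\lcm(f^\mu,f^\nu)}{f^\mu}\mathbf{e}_\mu$ — this is exactly where the toric/monomial hypothesis enters (the paper's Example~\ref{ex:counterexample} shows it fails for non-monomial maps), and your proposal never identifies this point; (c) the actual engine of the proof, namely a generating set for $\syz(F^k)$ for every $k$, which the paper obtains as the syzygies attached to minimal circuits of the graphs $\Gamma_k$ (by induction for $m\leq k\leq 2m-3$, and by a Gr\"{o}bner basis argument with Buchberger's criterion and Schreyer's theorem for $2m-2\leq k\leq n$); from these syzygies one reads off monomial ideals $I_k$ with $F^k/F^{k-1}\cong S/I_k$, and only then does $Z_k$ appear, as the zero scheme of the generators of $I_k$; (d) the bridge between $\im(\varphi^2)$ and the modules generated by the $\gbf{\beta}_j$, via $\gbf{\alpha}_k=\gcd(f^k_{k+1},f^{k+1}_k)\gbf{\beta}_k$, which is what produces the extra divisor $\gcd(D^k_{k+1},D^{k+1}_k)$ in the definition of $Z_k$ for $1\leq k\leq m$ — your three ``regimes'' do not account for this; and (e) the $\Cl(X)$-degree computation showing that $\gbf{\beta}_k$ has degree $L_{\mu_k}\otimes L_{\nu_k}\otimes L^{-1}(\gcd(D^{\mu_k},D^{\nu_k}))$, which supplies the twist in \eqref{eqn:sheafquotient1intro}. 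Saying that ``checking that the $k$-th syzygy is killed on exactly the subscheme $Z_k$ modulo earlier relations will be the most technical point'' is an accurate diagnosis, but it is the statement to be proved, and no mechanism for proving it is offered; as it stands, part (2) is unproved.
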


To prove Theorem~\ref{thm:mainintro} we lift the complex $T^\bullet$ to a complex of $\Cl(X)$-graded $S$-modules using the functor of Cox~\cite{Cox95}, where $\Cl(X)$ and $S$ denote the class group and Cox ring of $X$ respectively. Explicitly, if $S(L)$ denotes the free $S$-module with generator in degree $L\in \Cl(X)$, then $T^\bullet$ can be lifted to the complex 
\begin{equation}
\label{eqn:complexSmods1}
S(L) \xlongrightarrow{\varphi^3}  \bigoplus_{j=1}^m S(L_{j,{j+1}}) \xlongrightarrow{\varphi^2} \bigoplus_{j=1}^m S(L_j) \xlongrightarrow{\varphi^{1}} S(L).
\end{equation}
 This translates the problem to one from commutative algebra. The lion's share of the effort in proving Theorem~\ref{thm:mainintro} goes into proving part (2). For this, the image of $\varphi^2$ is generated by elements $\gbf{\alpha}_1,\dots, \gbf{\alpha}_m$, and our chosen order on the set of transpositions on $m$ letters determines an order on the generators $\gbf{\beta}_1,\dots, \gbf{\beta}_n$ of $\ker(\varphi^1)$ which in turn defines a filtration
\[
\im(\varphi^2) = F^0 \subseteq F^1\subseteq F^2\subseteq \cdots \subseteq F^{n-1}\subseteq F^n=\ker(\varphi^1).
\]
We give a presentation for each successive quotient $F^k/F^{k-1}$ as a cyclic $\Cl(X)$-graded $S$-module of the form $(S/I_k)(L_{\mu_k}\otimes L_{\nu_k}\otimes L^{-1}(\gcd(D^{\mu_k},D^{\nu_k})))$ for some monomial ideal $I_k$ whose generators are defined via the Cartier divisors $D^1,\dots, D^m$ labelling the right-hand arrows in the diagram \eqref{eqn:diagram} illustrating the wheel (see Proposition~\ref{prop:filtration2}). This calculation can be performed in any given example using Macaulay2~\cite{M2}, but we present a unified description for all $1\leq k\leq n$.  (Warning: M2 may choose an order on the generators $\gbf{\beta}_1,\dots, \gbf{\beta}_n$ that differs from ours, see Remark~\ref{rem:hex}.)

Our main tool, which may be of independent interest, is a description of the syzygy module of $\ker(\varphi^1)$ in terms of walks in the complete graph $\Gamma$ on $m$ vertices. In fact, for each $1\leq k\leq n$ we introduce a subgraph $\Gamma_k$ of $\Gamma$ that enables us to describe uniformly the module of syzygies $\syz(F^k)$ in terms of certain walks in $\Gamma_k$. To state the result, recall that a circuit in $\Gamma_k$ is a closed walk that does not pass through a given vertex twice. It is straightforward to associate a syzygy to every such circuit (see Lemma~\ref{lem:syzygy}). A circuit is said to be minimal if it admits no chords (see \eqref{eqn:splitting}). We prove the following result (see Theorem~\ref{thm:main1}).

\begin{theorem}
\label{thm:intro}
For $m\leq k\leq n$, the module $\syz(F^{k})$ is generated by the set of syzygies associated to the minimal circuits of $\Gamma_k$.
\end{theorem}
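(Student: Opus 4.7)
The plan is to proceed by induction on $k$, with base case $k=m$. For the base case, one analyses the graph $\Gamma_m$ directly: its edges correspond to the first $m$ transpositions $\tau_1,\dots,\tau_m$, which in the chosen order should form the natural $m$-cycle $(1,2),(2,3),\dots,(m,1)$. One then enumerates the minimal circuits of $\Gamma_m$, associates syzygies to them via Lemma~\ref{lem:syzygy}, and verifies that they generate $\syz(F^m)$ using the presentation of the successive quotients $F^j/F^{j-1}$ provided by Proposition~\ref{prop:filtration2}.

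For the inductive step, suppose the statement holds for some $k$ with $m\leq k<n$. Since $F^{k+1}=F^k+S\cdot\gbf{\beta}_{k+1}$, a standard principle from commutative algebra tells us that $\syz(F^{k+1})$ is generated by:
\begin{enumerate}
\item[(1)] the image of $\syz(F^k)$ under the inclusion obtained by appending a zero in the $\gbf{\beta}_{k+1}$-coordinate; and
\item[(2)] one new syzygy of the form
\[
f\cdot \gbf{\beta}_{k+1} - \sum_{i=1}^m \lambda_i\,\gbf{\alpha}_i - \sum_{j=1}^k \mu_j\,\gbf{\beta}_j = 0
\]
for each generator $f$ of the ideal quotient $(F^k:\gbf{\beta}_{k+1})\subseteq S$, where $\lambda_i,\mu_j\in S$ realise $f\cdot\gbf{\beta}_{k+1}$ as an element of $F^k$.
\end{enumerate}
The inductive hypothesis handles (1), after noting that any minimal circuit of $\Gamma_k$ that acquires the new edge $\tau_{k+1}$ as a chord in $\Gamma_{k+1}$ splits, via \eqref{eqn:splitting}, as a sum of two syzygies associated to strictly smaller minimal circuits of $\Gamma_{k+1}$, both of which pass through $\tau_{k+1}$ and therefore belong to the submodule generated by the new syzygies arising in (2).

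The principal obstacle is an explicit computation of the ideal quotient $(F^k:\gbf{\beta}_{k+1})$. Proposition~\ref{prop:filtration2} provides a monomial description of each $F^j/F^{j-1}$ in terms of the divisors $D^1,\dots,D^m$, from which one can assemble a monomial generating set of $(F^k:\gbf{\beta}_{k+1})$ by tracking how the image of $\gbf{\beta}_{k+1}$ in successive quotients is annihilated. The key combinatorial task is then to match each minimal generator $f$ of this ideal quotient with a minimal circuit of $\Gamma_{k+1}$ passing through $\tau_{k+1}$, and to verify that the new syzygy associated to $f$ coincides with the syzygy associated to that circuit via Lemma~\ref{lem:syzygy}. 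This correspondence should rest on the specific order of transpositions fixed in Section~\ref{sec:syzygies}, which is engineered so that any minimal circuit containing $\tau_{k+1}$ uses only edges of index at most $k$. Verifying this bijection rigorously, via the gcd/lcm calculus on torus-invariant divisors introduced in the paper, is where the bulk of the effort is expected to lie.
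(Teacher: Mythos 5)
Your overall skeleton (induct on $k$; new syzygies of $F^{k+1}=F^k+S\gbf{\beta}_{k+1}$ come from generators of the colon ideal $(F^k:\gbf{\beta}_{k+1})$, old ones from $\syz(F^k)$ via Lemma~\ref{lem:chord}) is reasonable, and your treatment of item (1) essentially matches the paper's use of Lemma~\ref{lem:chord}. But the proposal has a genuine logical gap: both your base case and your computation of $(F^k:\gbf{\beta}_{k+1})$ appeal to Proposition~\ref{prop:filtration2}, and that proposition is downstream of the theorem you are proving. In the paper, Proposition~\ref{prop:filtration1} (and hence Proposition~\ref{prop:filtration2}) is deduced \emph{from} Theorem~\ref{thm:main1}: the ideal $I_{k}$ is read off from the $\gbf{\varepsilon}_{k}$-coefficients of the circuit syzygies, and $I_{k+1}$ is precisely the colon ideal $(F^k:\gbf{\beta}_{k+1})$ you need. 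So the one place where you locate ``the bulk of the effort'' --- producing a monomial generating set of the colon ideal and matching it with minimal circuits through the new edge --- is exactly the content of the theorem, and the only tool you name for it is circular. (A smaller slip: for $m\leq k\leq n$ the module $\syz(F^k)$ in the statement is taken with respect to the generators $\gbf{\beta}_1,\dots,\gbf{\beta}_k$ alone, so the $\gbf{\alpha}_i$ should not appear in your relation in item (2); also, the two circuits produced by a chord need a one-line check that they are themselves minimal in $\Gamma_{k+1}$.)

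To close the gap you need an independent computation, which is what the paper supplies. For $m<k\leq 2m-3$ it argues by hand: comparing coefficients of the basis vectors $\mathbf{e}_\mu$ in $\psi(\gbf{\sigma})=0$ and subtracting explicit multiples of the two circuit syzygies $\gbf{\sigma}_{\gamma_1}$, $\gbf{\sigma}_{\gamma_2}$ from \eqref{eqn:sigma1} and \eqref{eqn:sigma2} reduces any syzygy to one lying in $\syz(F^{k-1})$, after which the inductive hypothesis and Lemma~\ref{lem:chord} finish. For $2m-2\leq k\leq n$ it abandons induction on colon ideals altogether: with respect to a term-over-position order the leading term of $\gbf{\beta}_j$ is $f^{\mu_j,\nu_j}/f^{\nu_j}\mathbf{e}_{\nu_j}$, Buchberger's criterion shows $\gbf{\beta}_1,\dots,\gbf{\beta}_k$ form a Gr\"{o}bner basis of $F^k$, Schreyer's theorem then says the syzygies $\gbf{\sigma}_{(i,j)}$ of the S-pairs (which exist only for pairs with $\nu_i=\nu_j$) generate $\syz(F^k)$, and Lemma~\ref{lem:mincircuits} identifies these pairs with the minimal circuits of $\Gamma_k$. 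Some such mechanism --- Gr\"{o}bner/Schreyer, or a direct coefficient analysis --- is what your item (2) is missing; without it the proposal is a plan rather than a proof.
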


\noindent The precise description of the syzygies from Theorem~\ref{thm:intro} allows us to read off directly a set of monomial generators for each ideal $I_k$, and this feeds into the proof of Theorem~\ref{thm:mainintro} above. Generating sets for toric ideals arising from graphs were studied by Hibi--Ohsugi~\cite{OhsugiHibi}, and some of the graph-theoretic tools that we use here were also employed there. Properties of $\kk$-algebras arising from graphs have also been studied widely by Villarreal, see for example \cite{Villarreal}.  

Our main result was motivated by the statement of Cautis--Logvinenko~\cite[Lemma~3.1]{CL09} which asserts that  in the special case $m=3$, a version of Theorem~\ref{thm:mainintro} holds for the complex $T^\bullet$ from \eqref{eqn:Tbullet} arising from a diagram \eqref{eqn:diagram} on an arbitrary smooth separated scheme.  However, this is not true in general: the assertion \cite[Proof of Lemma 3.1(2)]{CL09} that certain elements $\beta_1, \beta_2, \beta_3$ generate $\ker(d^1)$ may fail if the maps from diagram \eqref{eqn:diagram} are not monomial maps. 

\begin{example}
\label{ex:counterexample}
For a counterexample in the notation of \emph{loc.cit.} (we write the signs explicitly), suppose the maps $L_1\to L$, $L_2\to L$ and $L_3\to L$ from \eqref{eqn:diagram} are defined locally near a point $p\in X$ as multiplication by $f_1:=x, f_2:=x+y, f_3:=y\in \mathscr{O}_{X,p}$. Then $(1,-1,1)$ lies in $\ker(d^1)$, but it does not lie in the submodule generated by $\beta_1 = (f_2,-f_1,0), \beta_2 = (-f_3,0,f_1), \beta_3= (0,f_3,-f_2)$. 
\end{example}

The assumption in Theorem~\ref{thm:mainintro} that $X$ is toric and the maps from \eqref{eqn:diagram} are torus-equivariant ensures that each map arises from multiplication by a monomial in the Cox ring of $X$, in which case standard Gr\"{o}bner theory shows that analogous elements $\gbf{\beta}_1, \gbf{\beta}_2, \gbf{\beta}_3$ generate the appropriate kernel (see Lemma~\ref{lem:1.1}). Under these additional assumptions, Remark~\ref{rem:CautisLogvinenko} explains how the statement of Cautis--Logvinenko~\cite[Lemma~3.1]{CL09} can be recovered as a special case of Theorem~\ref{thm:mainintro} when $X$ is smooth. The main results of both Cautis--Logvinenko~\cite{CL09} and Cautis--Craw--Logvinenko~\cite{CCL12} require the statement of \cite[Lemma~3.1]{CL09} only when $X$ is a smooth toric variety and the maps from \eqref{eqn:diagram} are torus-equivariant, so Theorem~\ref{thm:mainintro} holds at the level of generality required for both of those papers.

In fact, Theorem~\ref{thm:mainintro} provides a unified description of the sheaves \eqref{eqn:sheafquotient1intro} in the filtration on $H^{-1}(T^{\bullet})$ even for $m=3$, improving slightly on the statement from \cite[Lemma~3.1]{CL09}.  More generally, for $m>3$, the schemes $Z_k$ ($1\leq k\leq n$) divide naturally into three families determined by the intervals \one\ $1\leq k\leq m$; \two\ $m+1 \leq k \leq 2m-3$; and \three\ $2m-2\leq k \leq n$, leading to a more involved filtration in this case. That the statement is considerably more complicated for $m>3$ stems from the simple fact that any pair of vertices of a triangle are adjacent, while the same statement is not true for a polygon with $m>3$ vertices.

\medskip

\noindent\textbf{Acknowledgements.}  
Thanks to Raf Bocklandt for generating Example~\ref{exa:hex} and to Sonja Petrovic for comments on an earlier version of this paper. Thanks also to the anonymous referees for many helpful remarks. Our results owes much to experiments made with Macaulay2~\cite{M2}. Both authors were supported by EPSRC grant EP/G004048/1.

\section{Syzygies from walks in a complete graph}
\label{sec:syzygies}
Let $S=\kk[x_1,\dots, x_d]$ be a polynomial ring over a field $\kk$ and let $f^1, \dots, f^m\in S$ be monomials for some $m\geq 2$. Consider the free $S$-module with basis $\textbf{e}_1,\dots, \textbf{e}_m$ and define an $S$-module homomorphism $\varphi\colon \bigoplus_{\mu=1}^m S\mathbf{e}_\mu \longrightarrow S$ by setting $\varphi(\mathbf{e}_{\mu})=f^{\mu}$ for $1\leq \mu\leq m$.  For every pair of indices $1 \leq \mu < \nu \leq m$ we define monomials $f^{\mu,\nu}=\lcm(f^{\mu},f^{\nu})$ and set 
\begin{equation}
\label{eqn:Beta}
\gbf{\beta}_{(\mu,\nu)}= \frac{f^{\mu,\nu}}{f^{\nu}}\mathbf{e}_{\nu}-\frac{f^{\mu,\nu}}{f^{\mu}}\mathbf{e}_{\mu}.  
\end{equation}
The module of syzygies of $M:=\langle f^1,\dots, f^m\rangle$ is defined to be the $S$-module $\syz(M):= \ker(\varphi)$.  The following result is well known; see for example Eisenbud~\cite[Lemma~15.1]{Eisenbud95}.

\begin{lemma}
\label{lem:1.1}
The kernel of $\varphi$ is generated by the elements $\gbf{\beta}_{(\mu,\nu)}$ for $1 \leq \mu < \nu \leq m$.
\end{lemma}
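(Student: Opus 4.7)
The plan is to exploit the fine $\ZZ^d$-grading on $S=\kk[x_1,\dots,x_d]$ and on the free module $\bigoplus_{\mu=1}^m S\mathbf{e}_\mu$, where $\mathbf{e}_\mu$ is placed in multidegree $\deg(f^\mu)$. Since each $f^\mu$ is a monomial, $\varphi$ is a homomorphism of $\ZZ^d$-graded $S$-modules, and hence $\ker(\varphi)$ is a $\ZZ^d$-graded submodule. It therefore suffices to show that every homogeneous element of $\ker(\varphi)$ lies in the submodule generated by the $\gbf{\beta}_{(\mu,\nu)}$.

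A homogeneous element of multidegree $x^a$ has the form $\sigma=\sum_{\mu\in J} c_\mu \tfrac{x^a}{f^\mu}\mathbf{e}_\mu$, where $c_\mu\in \kk^{\ast}$ and $J\subseteq\{1,\dots,m\}$ is the set of indices for which $f^\mu$ divides $x^a$. Applying $\varphi$ gives $\varphi(\sigma) = \big(\sum_{\mu\in J} c_\mu\big) x^a$, so membership in $\ker(\varphi)$ is equivalent to $\sum_{\mu\in J} c_\mu = 0$. The next step is to induct on $\vert J \vert$: the base case $\vert J\vert \leq 1$ forces $\sigma=0$, so we may assume $\vert J\vert\geq 2$ and pick distinct $\mu,\nu\in J$.

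Because $f^\mu$ and $f^\nu$ both divide $x^a$, so does their least common multiple $f^{\mu,\nu}$, and hence $\tfrac{x^a}{f^{\mu,\nu}}\gbf{\beta}_{(\mu,\nu)}$ is a well-defined homogeneous element of multidegree $x^a$ lying in the submodule generated by the $\gbf{\beta}_{(\cdot,\cdot)}$. Choosing the scalar so that subtracting a suitable multiple of this element from $\sigma$ cancels the $\mathbf{e}_\nu$-coefficient produces a new homogeneous syzygy $\sigma'$ of the same multidegree whose support is strictly smaller than that of $\sigma$. (The key point is that this subtraction can only modify the $\mathbf{e}_\mu$-coefficient, not create any new nonzero summands.) The induction hypothesis then applies to $\sigma'$, completing the proof.

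There is no real obstacle here: the argument is essentially Buchberger's criterion applied to a monomial ideal, where every $S$-pair reduces to zero in one step. The only delicate point is to make sure the reduction is carried out inside a fixed multidegree, which is why passing to the $\ZZ^d$-grading at the outset is so useful.
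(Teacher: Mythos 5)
Your argument is correct and complete. The paper itself gives no proof of Lemma~\ref{lem:1.1}: it records the statement as well known and simply cites Eisenbud~\cite[Lemma~15.1]{Eisenbud95}. Your multigraded reduction — placing $\mathbf{e}_\mu$ in degree $\deg(f^\mu)$, restricting to a single $\ZZ^d$-degree $x^a$, observing that the kernel condition is $\sum_{\mu\in J}c_\mu=0$, and inducting on the support by subtracting $c_\nu\,\tfrac{x^a}{f^{\mu,\nu}}\gbf{\beta}_{(\mu,\nu)}$ to kill one coordinate at a time — is precisely the standard proof of the cited lemma, so you have in effect supplied the details the paper omits, with no gap (the only cosmetic point being that the support of $\sigma$ may be a proper subset of your divisibility set $J$, which does not affect the induction).
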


It is convenient to order the set $\{ (\mu,\nu) \mid 1 \leq \mu < \nu \leq m \}$ of transpositions of $m$ letters. First list the transpositions of adjacent letters $\tau_j=(j,j+1)$ for $1 \leq j \leq m-1$. Set $\tau_m=(1,m)$, then list all remaining transpositions that involve $1$ as $\tau_j=(1,j-m+2)$ for $m+1 \leq j \leq 2m-3$, and finally list all remaining transpositions lexicographically, so $\tau_i=(\mu_i,\nu_i)$ precedes $\tau_j=(\mu_j,\nu_j)$ if and only if $\mu_i < \mu_j$ or $\mu_i=\mu_j$ and $\nu_i < \nu_j$. We may therefore list the generators of $\ker(\varphi)$ from Lemma~\ref{lem:1.1} by setting $\gbf{\beta}_j :=\gbf{\beta}_{(\mu_j,\nu_j)}$ for all $1 \leq j \leq n$, where $n= \binom{m}{2}$. This choice of order enables us to define for each $1 \leq k \leq n$ an $S$-module
 $$
 F^k=\langle \gbf{\beta}_1,\dots, \gbf{\beta}_{k}  \rangle.
 $$

 Our primary goal is to provide for each $1\leq k\leq n$ an explicit set of generators for the module of syzygies $\syz(F^k)$ that encodes the relations between $\gbf{\beta}_1,\dots, \gbf{\beta}_{k}$. Recall that this module is defined to be the kernel of the surjective $S$-module homomorphism
\[
\psi\colon \bigoplus_{j=1}^k S\gbf{\varepsilon}_j \longrightarrow F^k
\]
satisfying $\psi(\gbf{\varepsilon}_{j})= \gbf{\beta}_j$ for $1\leq j\leq k$. We compute this module directly for $1\leq k\leq m$.
 
 \begin{lemma}
 \label{lem:cyclic}
 The $S$-module $\syz(F^k)$ is the zero module for $1\leq k \leq m-1$, and it is a free module of rank one for $k=m$.
 \end{lemma}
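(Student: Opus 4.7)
My strategy is to read off each relation $\sum_{j=1}^{k} g_j \gbf{\beta}_j = 0$ coordinate-wise in the free module $\bigoplus_{\mu} S\mathbf{e}_\mu$, exploiting the fact that for $1\leq j\leq m-1$ the generator $\gbf{\beta}_j=\gbf{\beta}_{(j,j+1)}$ involves only $\mathbf{e}_j$ and $\mathbf{e}_{j+1}$. Thus $\gbf{\beta}_1, \dots, \gbf{\beta}_{m-1}$ form a ``chain'' $1\text{--}2\text{--}\cdots\text{--}m$ in which successive generators share exactly one basis vector, and adjoining $\gbf{\beta}_m = \gbf{\beta}_{(1,m)}$ closes the chain into a cycle.

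For $1\leq k\leq m-1$ I would argue by descending induction. In any relation $\sum_{j=1}^k g_j\gbf{\beta}_j=0$, the basis vector $\mathbf{e}_{k+1}$ appears only in the summand $\gbf{\beta}_k$, with nonzero monomial coefficient $f^{k,k+1}/f^{k+1}$; since $S$ is a domain this forces $g_k=0$. Reading off the coefficient of $\mathbf{e}_k$ in the resulting shorter relation then forces $g_{k-1}=0$, and iterating yields $\syz(F^k)=0$.

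For $k=m$ I expect a nontrivial syzygy built by ``going around the wheel''. Setting $F:=\lcm(f^1,\dots,f^m)$, the candidate generator is
\[
\gbf{\sigma}_0 \;:=\; \sum_{j=1}^{m-1}\frac{F}{f^{j,j+1}}\,\gbf{\varepsilon}_j \;-\; \frac{F}{f^{1,m}}\,\gbf{\varepsilon}_m,
\]
whose coefficients lie in $S$ because each pairwise $\lcm$ around the cycle divides $F$. A direct substitution verifies $\psi(\gbf{\sigma}_0)=0$: the contributions to each $\mathbf{e}_\mu$ from the two neighbouring generators telescope because all products $g_j\cdot f^{j,j+1}$ equal the same monomial $F$ up to sign.

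The main task, and the only nonformal step, is to show $\gbf{\sigma}_0$ generates $\syz(F^m)$ freely. Given an arbitrary syzygy $(g_1,\dots,g_m)$, comparing coefficients of $\mathbf{e}_j$ for $2\leq j\leq m-1$ gives $g_{j-1}f^{j-1,j}=g_j f^{j,j+1}$, so the common value $h:=g_j f^{j,j+1}$ is divisible by each $f^{j,j+1}$; the equations from $\mathbf{e}_1$ and $\mathbf{e}_m$ then force $h=-g_m f^{1,m}$, so $h$ is also divisible by $f^{1,m}$. The key observation is that $\lcm\bigl(f^{1,2},\dots,f^{m-1,m},f^{1,m}\bigr)=F$, since each $f^\mu$ already divides at least one pairwise $\lcm$ around the cycle; hence $h=cF$ for a unique $c\in S$, and substituting back gives $(g_1,\dots,g_m)=c\cdot\gbf{\sigma}_0$. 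Freeness is immediate because $c$ is determined by any single coordinate of the syzygy.
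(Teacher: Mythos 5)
Your proof is correct and follows essentially the same route as the paper: for $k\leq m-1$ the chain structure of $\gbf{\beta}_1,\dots,\gbf{\beta}_k$ forces all coefficients in a relation to vanish (the paper states this as an immediate consequence of the chosen order), and for $k=m$ you compare coefficients to obtain $g_1f^{1,2}=g_2f^{2,3}=\cdots=-g_mf^{1,m}$ and conclude that every syzygy is an $S$-multiple of $\gbf{\sigma}_0$, exactly as in the paper. Your explicit observation that the common value is divisible by $\lcm\bigl(f^{1,2},\dots,f^{m-1,m},f^{1,m}\bigr)=\lcm(f^1,\dots,f^m)$ merely spells out a divisibility step the paper leaves implicit.
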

 \begin{proof}
Our choice of order on transpositions ensures that for $1\leq k\leq m-1$,  there can be no relations between $\gbf{\beta}_1,\dots, \gbf{\beta}_{k}$. For $k=m$, let $\gbf{\sigma} = \sum_{j=1}^m s_j \gbf{\varepsilon}_{j}$ be a syzygy on $\gbf{\beta}_1,\dots, \gbf{\beta}_{m}$ where $s_1,\dots,s_m\in S$. By comparing coefficients of each $\mathbf{e}_{i}$ in the expression
\[
0 = \psi(\gbf{\sigma})=  s_m \bigg( \frac{f^{1,m}}{f^{m}}\mathbf{e}_{m}-\frac{f^{1,m}}{f^{1}}\mathbf{e}_{1} \bigg) + 
  \sum_{j=1}^{m-1} s_j \bigg( \frac{f^{j,j+1}}{f^{j+1}}\mathbf{e}_{j+1}-\frac{f^{j,j+1}}{f^{j}}\mathbf{e}_{j} \bigg) 
\]
we obtain the following equations
\begin{equation}
\label{eqn:comparecoeffs}
s_1f^{1,2} = s_2f^{2,3} = \cdots = s_{m-1}f^{m-1,m} = - s_m f^{1,m}.
\end{equation}
It's easy to see (or see Lemma~\ref{lem:syzygy} below for a proof) that the element
\begin{equation}
\label{eqn:sigma0}
\gbf{\sigma}_0:= -\frac{\lcm(f^1,\dots,f^m)}{f^{1,m}} \gbf{\varepsilon}_m + \sum_{j=1}^{m-1} \frac{\lcm(f^1,\dots,f^m)}{f^{j,j+1}} \gbf{\varepsilon}_j
\end{equation}
is a syzygy. Moreover, equations \eqref{eqn:comparecoeffs} imply that 
\[
\gbf{\sigma} = \frac{s_1f^{1,2}}{\lcm(f^1,\dots,f^m)}\gbf{\sigma}_0,
\]
so $\syz(F^m)$ is the free $S$-module with basis $\gbf{\sigma}_0$.
 \end{proof}
 
 We study the module $\syz(F^k)$ for $m+1\leq k\leq n$ by studying walks in a graph.  Let $\Gamma$ be the complete graph on $m$ vertices, with vertex set $\{1,2,\dots,m\}$. Assign an orientation to each edge $e=(\mu,\nu)$ by directing it from $\mu$ to $\nu$ if $\mu < \nu$.  Regard every such edge as being labelled by the corresponding generator $\gbf{\beta}_{(\mu,\nu)}$ of $\ker(\varphi)$. The order on the generators $\gbf{\beta}_1,\dots, \gbf{\beta}_n$ introduced above determines an order on the set of edges $e_1,\dots, e_n$ of $\Gamma$. A {\it walk $\gamma$ of length} $\ell$ in $\Gamma$ is a walk in the undirected graph that traverses precisely $\ell$ edges. Every such walk is characterised by the sequence of vertices $\gamma=(\mu_1,\mu_2,\dots, \mu_{\ell+1})$ in $\Gamma$ that it touches. A walk $\gamma$ is \emph{closed} if $\mu_1=\mu_{\ell+1}$, and a {\it circuit} is a closed walk for which $\mu_1,\dots, \mu_\ell$ are distinct. Each circuit $\gamma$ defines uniquely a subgraph of $\Gamma$, and we let $\supp(\gamma)$ denote its set of edges.  Given a circuit $\gamma$  and an edge $e\in \supp(\gamma)$, set $\sign_\gamma(e) = +1$ if $\gamma$ traverses $e$ according to the orientation in $\Gamma$, and set $\sign_\gamma(e) = -1$ if $\gamma$ traverses $e$ against orientation.

   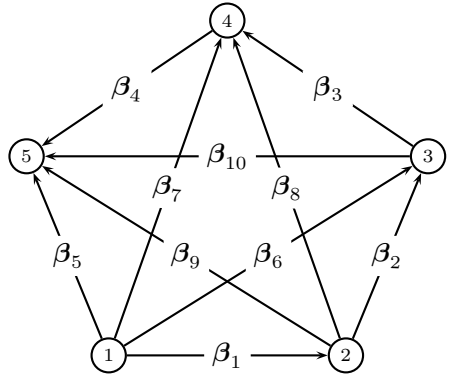
\begin{figure}[!ht]
 \centering
 \psset{unit=2.4cm}
 \begin{pspicture}(0,0)(2.2,1.9)
 \psset{linecolor=black}
 \cnodeput(0.45,0){A}{\tiny{$1$}} 
 \cnodeput(1.75,0){B}{\tiny{$2$}}
 \cnodeput(2.2,1.1){C}{\tiny{$3$}} 
  \cnodeput(1.1,1.85){D}{\tiny{$4$}}
 \cnodeput(0,1.1){E}{\tiny{$5$}}
 \ncline{->}{A}{B}\lput*{:0}{\small{$\gbf{\beta}_1$}}
 \ncline{->}{B}{C}\lput*{:290}{\small{$\gbf{\beta}_2$}}
 \ncline{->}{C}{D}\lput*{:220}{\small{$\gbf{\beta}_3$}}
 \ncline{->}{D}{E}\lput*{:142}{\small{$\gbf{\beta}_4$}}
  \ncline{<-}{E}{A}\lput*{:70}{\small{$\gbf{\beta}_5$}}
  \ncline{->}{A}{C}\lput*{:330}{\small{$\gbf{\beta}_6$}}
 \ncline{<-}{D}{A}\lput*{:105}{\small{$\gbf{\beta}_7$}}
 \ncline{<-}{D}{B}\lput*{:70}{\small{$\gbf{\beta}_8$}}
 \ncline{<-}{E}{B}\lput*{:30}{\small{$\gbf{\beta}_9$}}
 \ncline{<-}{E}{C}\lput*{:0}{\small{$\gbf{\beta}_{10}$}}
 \end{pspicture}
 \caption{Directed graph $\Gamma$ illustrating generators $\gbf{\beta}_1,\dots, \gbf{\beta}_n$ for $m=5$}
  \label{fig:quiver}
 \end{figure}
 
Given that the elements $\gbf{\beta}_j$ for $1\leq j\leq n$ correspond to edges in $\Gamma$, we may index the basis elements $\gbf{\varepsilon}_j$ for $1\leq j\leq n$ by edges $e_1,\dots, e_n$ in $\Gamma$. Thus, for the edge $e=e_j$ for $1\leq j\leq n$, we write $\gbf{\varepsilon}_e:=\gbf{\varepsilon}_j$. For any vertices $\mu_1, \dots, \mu_{\ell+1}$ in $\Gamma$, set 
 \[
 f^{\mu_1,\dots,\mu_{\ell+1}}=\lcm(f^{\mu_{1}}, \dots, f^{\mu_{\ell+1}}).
 \]
 For a walk $\gamma=(\mu_1,\mu_2,\dots, \mu_{\ell+1})$ in $\Gamma$ we define the monomial $f^\gamma:= f^{\mu_1,\dots,\mu_{\ell+1}}$. In particular, for an edge $e$ in $\Gamma$ joining vertex $\mu$ to $\nu$, we obtain $f^e=f^{\mu,\nu}$.

\begin{lemma}
\label{lem:syzygy}
For any circuit $\gamma$ of length at least three in $\Gamma$, the vector
$$
\gbf{\sigma}_{\gamma}= \sum_{e\in \supp(\gamma)} \sign_\gamma(e) \frac{f^{\gamma}}{f^e} \gbf{\varepsilon}_{e}
$$
is a syzygy on $\gbf{\beta}_1,\dots, \gbf{\beta}_{n}$.
\end{lemma}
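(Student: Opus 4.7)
The plan is to verify directly that $\psi(\gbf{\sigma}_\gamma)=0$ by expanding $\psi$ on each basis element $\gbf{\varepsilon}_e$ and observing that the resulting sum telescopes around the circuit. First I would check that the monomials $f^\gamma/f^e$ appearing in the definition of $\gbf{\sigma}_\gamma$ are well-defined elements of $S$: since the endpoints of $e$ both lie among the vertices of $\gamma$, the monomial $f^e=\lcm(f^\mu,f^\nu)$ divides $f^\gamma=\lcm(f^{\mu_1},\dots,f^{\mu_{\ell+1}})$, and similarly $f^{\mu_i}\mid f^\gamma$ for every vertex $\mu_i$ of $\gamma$.

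Next I would compute the key local identity. For an edge $e=(\mu,\nu)$ with $\mu<\nu$, the definition \eqref{eqn:Beta} gives
\[
\frac{f^\gamma}{f^e}\,\gbf{\beta}_e \;=\; \frac{f^\gamma}{f^e}\left(\frac{f^{\mu,\nu}}{f^\nu}\mathbf{e}_\nu - \frac{f^{\mu,\nu}}{f^\mu}\mathbf{e}_\mu\right) \;=\; \frac{f^\gamma}{f^\nu}\mathbf{e}_\nu - \frac{f^\gamma}{f^\mu}\mathbf{e}_\mu,
\]
so that, depending on whether $\gamma$ traverses $e$ with or against its orientation in $\Gamma$, the contribution $\sign_\gamma(e)\,(f^\gamma/f^e)\,\gbf{\beta}_e$ takes the uniform shape
\[
\frac{f^\gamma}{f^{\tail_\gamma(e)}}\mathbf{e}_{\tail_\gamma(e)} \;-\; \frac{f^\gamma}{f^{\head_\gamma(e)}}\mathbf{e}_{\head_\gamma(e)},
\]
where $\tail_\gamma(e)$ and $\head_\gamma(e)$ are the source and target of $e$ as $e$ is traversed by $\gamma$ (with the appropriate sign convention). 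In other words, the sign bookkeeping is precisely what is needed to convert the fixed orientation on $\Gamma$ into the orientation inherited from the walk.

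Finally, summing over $e\in\supp(\gamma)$ and using that $\gamma=(\mu_1,\dots,\mu_{\ell+1})$ is a circuit, each interior vertex $\mu_i$ appears exactly once as the head of the incoming edge and once as the tail of the outgoing edge, while $\mu_1=\mu_{\ell+1}$ plays both roles for the first and last edges. Consequently, the $\mathbf{e}_{\mu_i}$-coefficients cancel in pairs, and the sum telescopes to zero, which gives $\psi(\gbf{\sigma}_\gamma)=0$ as required. The only subtle point in the whole argument is to set up the sign convention so that the local computation produces a clean telescoping form; once that is in place the verification is essentially automatic.
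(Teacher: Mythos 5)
Your proposal is correct and takes essentially the same route as the paper: the local computation $\sign_\gamma(e)\,(f^{\gamma}/f^{e})\,\gbf{\beta}_{e} = (f^{\gamma}/f^{\mu'})\mathbf{e}_{\mu'}-(f^{\gamma}/f^{\mu})\mathbf{e}_{\mu}$ for an edge traversed from $\mu$ to $\mu'$, followed by telescoping around the closed walk. (Your ``uniform shape'' has the head and tail roles swapped relative to this sign convention, but that is a harmless bookkeeping slip that does not affect the cancellation.)
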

\begin{proof}
 If $\gamma$ has length two then $\gbf{\sigma}_{\gamma}=\gbf{\varepsilon}_e-\gbf{\varepsilon}_e=0$ which is not in fact a syzygy by definition. For any circuit $\gamma$ of length at least three we must show that 
\[
\psi(\gbf{\sigma}_{\gamma})=\sum_{e\in \supp(\gamma)} \sign_\gamma(e) \frac{f^{\gamma}}{f^e}\gbf{\beta}_{e} = 0. 
\]
For an edge $e$ that $\gamma$ traverses in the direction from vertex $\mu$ to vertex $\mu^\prime$, we have that 
$$
\sign_\gamma(e) \frac{f^{\gamma}}{f^e}\gbf{\beta}_{e}= \frac{f^{\gamma}}{f^e}\bigg(\frac{f^{e}}{f^{\mu^\prime}}
\mathbf{e}_{\mu^\prime}-\frac{f^{e}}{f^{\mu}}\mathbf{e}_{\mu}\bigg) = \frac{f^{\gamma}}{f^{\mu^\prime}}
\mathbf{e}_{\mu^\prime}-\frac{f^{\gamma}}{f^{\mu}}\mathbf{e}_{\mu}.
$$
The sum of all such terms over $e\in \supp(\gamma)$ collapses as a telescoping series since $\gamma$ is closed.
\end{proof}

For $1\leq k\leq n$, let $\Gamma_k$ denote the spanning subgraph of $\Gamma$ that has vertex set $\{1,\dots, m\}$, and which includes only the first $k$ edges of $\Gamma$ (see Figure~\ref{fig:mainproof1}(a) below for the case $k=m+3$). Clearly $\Gamma=\Gamma_n$. Let $\gamma=(\mu_1,\dots,\mu_\ell,\mu_1)$ be a circuit in $\Gamma_k$ for some $k$. A \emph{chord} of $\gamma$ in $\Gamma_k$ is any edge of the form $c=(\mu_r,\mu_s)$ for some $1 \leq r < s \leq \ell$ that does not lie in $\supp(\gamma)$. Every such chord $c$ splits $\gamma$ into two circuits: 
\begin{equation}
\label{eqn:splitting}
 \gamma_1=(\mu_{r},\dots,\mu_{s},\mu_{r}) \quad \text{and} \quad\gamma_2=(\mu_{1},\dots,\mu_{r},\mu_{s},\dots,\mu_{\ell},\mu_{1}).
\end{equation}
A circuit must have length at least four if it is to admit a chord. We define a {\it minimal circuit} of $\Gamma_k$ to be a circuit of length at least three that has no chords. 

\begin{lemma}
\label{lem:chord}
Let $\gamma$ be a circuit in $\Gamma_k$ admitting a chord in $\Gamma_k$ that splits $\gamma$ into circuits $\gamma_1$ and $\gamma_2$ as in \eqref{eqn:splitting}. Then the syzygy $\gbf{\sigma}_{\gamma}$ is contained in the module generated by $\gbf{\sigma}_{\gamma_1}$ and $\gbf{\sigma}_{\gamma_2}$.
\end{lemma}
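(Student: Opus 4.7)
The plan is to write down an explicit $S$-linear combination exhibiting $\gbf{\sigma}_\gamma$ in terms of $\gbf{\sigma}_{\gamma_1}$ and $\gbf{\sigma}_{\gamma_2}$, and then verify it edge by edge. The natural candidate, motivated by the shape of the monomials $f^\gamma$, $f^{\gamma_1}$, $f^{\gamma_2}$ as least common multiples of the same underlying set of $f^\mu$'s, is the identity
\[
\gbf{\sigma}_{\gamma}\;=\;\frac{f^{\gamma}}{f^{\gamma_{1}}}\,\gbf{\sigma}_{\gamma_{1}}\;+\;\frac{f^{\gamma}}{f^{\gamma_{2}}}\,\gbf{\sigma}_{\gamma_{2}}.
\]

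First I would check that the two scalar coefficients actually lie in $S$. Since the vertex sets of $\gamma_1$ and $\gamma_2$ are each contained in the vertex set of $\gamma$ (indeed their union is precisely the vertex set of $\gamma$ by the splitting \eqref{eqn:splitting}), we have that $f^{\gamma_1}$ and $f^{\gamma_2}$ both divide $f^\gamma=\lcm(f^{\gamma_1},f^{\gamma_2})$, so the two coefficients are honest monomials in $S$.

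Next I would verify the proposed identity term by term, grouping the edges of $\gamma_1$ and $\gamma_2$ according to whether they equal the chord $c$. The key orientation observation is that $\supp(\gamma_1)\cap\supp(\gamma_2)=\{c\}$, and that $\gamma_1$ and $\gamma_2$ traverse $c$ in opposite directions, so $\sign_{\gamma_1}(c)=-\sign_{\gamma_2}(c)$. For every other edge $e\in\supp(\gamma_1)\setminus\{c\}$ (respectively $e\in\supp(\gamma_2)\setminus\{c\}$) we have $e\in\supp(\gamma)$ and $\sign_{\gamma_1}(e)=\sign_\gamma(e)$ (respectively $\sign_{\gamma_2}(e)=\sign_\gamma(e)$) because $\gamma$ follows $\gamma_1$ (resp.\ $\gamma_2$) along that edge. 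Then the coefficient of $\gbf{\varepsilon}_e$ on the right-hand side collapses as
\[
\frac{f^{\gamma}}{f^{\gamma_{i}}}\cdot\sign_{\gamma_{i}}(e)\frac{f^{\gamma_{i}}}{f^{e}}\;=\;\sign_{\gamma}(e)\frac{f^{\gamma}}{f^{e}},
\]
matching $\gbf{\sigma}_\gamma$, while for $e=c$ the two contributions from $\gamma_1$ and $\gamma_2$ both equal $\pm f^\gamma/f^c$ with opposite signs and therefore cancel, in agreement with $c\notin\supp(\gamma)$.

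The main obstacle is really just the orientation bookkeeping on the chord, which is the only edge where the two telescoping contributions must cancel rather than add; once the sign relation $\sign_{\gamma_1}(c)=-\sign_{\gamma_2}(c)$ is pinned down from the way \eqref{eqn:splitting} is written, the remaining verification is a direct monomial computation.
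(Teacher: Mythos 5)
Your proposal is correct and is essentially the paper's own argument: both establish the identity $\gbf{\sigma}_{\gamma}=\frac{f^{\gamma}}{f^{\gamma_{1}}}\gbf{\sigma}_{\gamma_{1}}+\frac{f^{\gamma}}{f^{\gamma_{2}}}\gbf{\sigma}_{\gamma_{2}}$, using the same key facts that $\sign_{\gamma_1}(c)=-\sign_{\gamma_2}(c)$ while all other edges keep the sign they have in $\gamma$, and that $f^{\gamma_1}$, $f^{\gamma_2}$ divide $f^\gamma$ so the coefficients lie in $S$. The paper derives the identity by inserting the two cancelling chord terms into $\gbf{\sigma}_\gamma$ and regrouping, whereas you verify it edgewise from the right-hand side; this is the same computation read in the opposite direction.
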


\begin{proof}
Let $c$ be the chord. For $i=1,2$, let $\gamma_i\setminus c$ denote the walk obtained from $\gamma_i$ by removing the edge $c$. Since $\sign_{\gamma_1}(c)=-\sign_{\gamma_2}(c)$ we may rewrite
\begin{eqnarray*}
\gbf{\sigma}_{\gamma} & = & \sign_{\gamma_1}(c)\frac{f^\gamma}{f^c}\gbf{\varepsilon}_c + \sign_{\gamma_2}(c) \frac{f^\gamma}{f^c}\gbf{\varepsilon}_c + \sum_{e\in \supp(\gamma)} \sign_\gamma(e) \frac{f^{\gamma}}{f^{e}} \gbf{\varepsilon}_{e} \\
 & = & \sign_{\gamma_1}(c) \frac{f^{\gamma}}{f^{c}} \gbf{\varepsilon}_{c}  + \!\!\sum_{e\in \supp(\gamma_1\setminus c)} \sign_{\gamma_1}(e) \frac{f^{\gamma}}{f^{e}} \gbf{\varepsilon}_{e} + \sign_{\gamma_2}(c) \frac{f^{\gamma}}{f^{c}} \gbf{\varepsilon}_{c}  + \!\!\sum_{e\in \supp(\gamma_2\setminus c)} \sign_{\gamma_2}(e) \frac{f^{\gamma}}{f^{e}} \gbf{\varepsilon}_{e} \\
 & = & \frac{f^\gamma}{f^{\gamma_1}} \gbf{\sigma}_{\gamma_1} +  \frac{f^\gamma}{f^{\gamma_2}} \gbf{\sigma}_{\gamma_2}.
\end{eqnarray*}
It remains to note that $f^{\gamma_1}= f^{\mu_{r},\dots,\mu_{s}}$ divides $f^\gamma=f^{\mu_1,\dots,\mu_\ell}$, and similarly, $f^{\gamma_2}$ divides $f^\gamma$.
\end{proof}

We are now in a position to establish the main result of this section.

\begin{theorem}
\label{thm:main1}
For $1\leq k\leq n$, the $S$-module $\syz(F^k)$ is generated by the syzygies $\gbf{\sigma}_{\gamma}$, where $\gamma$ is a minimal circuit of $\Gamma_k$.
\end{theorem}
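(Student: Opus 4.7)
The plan is to prove the theorem by induction on $k$, with the base case $k \leq m$ reducing immediately to Lemma~\ref{lem:cyclic}. For $1 \leq k \leq m-1$, the subgraph $\Gamma_k$ consists of the path $1-2-\cdots-(k+1)$ together with isolated vertices, which has no circuits at all, so both sides of the claimed equality are the zero module. For $k = m$, the graph $\Gamma_m$ is the cycle $(1,2,\dots,m,1)$, which is its unique minimal circuit, and a direct check using Lemma~\ref{lem:syzygy} shows that its associated syzygy agrees up to sign with the generator $\gbf{\sigma}_0$ from Lemma~\ref{lem:cyclic}.

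For the inductive step, fix $k \geq m+1$ and write $M_k$ for the $S$-submodule of $\syz(F^k)$ generated by $\{\gbf{\sigma}_\gamma : \gamma \text{ a minimal circuit of } \Gamma_k\}$; the inclusion $M_k \subseteq \syz(F^k)$ is Lemma~\ref{lem:syzygy}. The first sub-step is to show that the generators of $\syz(F^{k-1})$ supplied by the inductive hypothesis already lie in $M_k$: any minimal circuit $\gamma$ of $\Gamma_{k-1}$ is also a circuit of $\Gamma_k$, and either it remains minimal there (so $\gbf{\sigma}_\gamma \in M_k$ directly), or $e_k$ is a chord of $\gamma$ in $\Gamma_k$, in which case Lemma~\ref{lem:chord} expresses $\gbf{\sigma}_\gamma$ in terms of the two strictly shorter circuits produced by splitting $\gamma$ along $e_k$. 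Iterating this splitting terminates since the circuit length strictly decreases, so $\gbf{\sigma}_\gamma \in M_k$ in every case.

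The crux is then to take an arbitrary $\gbf{\sigma} = \sum_{j=1}^k s_j \gbf{\varepsilon}_j \in \syz(F^k)$ and modify it by an element of $M_k$ so as to kill its coefficient $s_k$ of $\gbf{\varepsilon}_{e_k}$; the remainder will lie in $\syz(F^{k-1})$ and therefore in $M_k$ by the previous step. Reading off the coefficient of $\mathbf{e}_{\mu_k}$ in the equation $\psi(\gbf{\sigma})=0$ yields
\[
s_k f^{e_k} \;=\; \sum_{j<k,\,\nu_j=\mu_k} s_j f^{e_j} \;-\; \sum_{j<k,\,\mu_j=\mu_k} s_j f^{e_j},
\]
with a symmetric equation at $\nu_k$. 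Since $k \geq m+1$, the graph $\Gamma_{k-1}$ is connected (it contains the $m$-cycle), so paths from $\mu_k$ to $\nu_k$ close up with $e_k$ into circuits of $\Gamma_k$ through $e_k$. The plan is to use the two vertex relations iteratively to express $s_k$ as a monomial $S$-linear combination of the quantities $f^\gamma/f^{e_k}$, where $\gamma$ ranges over circuits of $\Gamma_k$ through $e_k$; Lemma~\ref{lem:chord} then lets us pass to minimal such circuits. Once this is achieved, subtracting the matching $S$-linear combination of $\gbf{\sigma}_\gamma$ (each of which carries the term $\pm (f^\gamma/f^{e_k})\gbf{\varepsilon}_{e_k}$) from $\gbf{\sigma}$ eliminates $s_k\gbf{\varepsilon}_{e_k}$ and completes the induction. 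The principal obstacle is precisely this last monomial reduction, namely converting the vertex-wise cancellation equations into an explicit circuit-by-circuit expression for $s_k$. I expect the cleanest route is to track contributions along chosen paths in $\Gamma_{k-1}$ joining $\mu_k$ to $\nu_k$, so that the successive $\lcm$'s along the path assemble into the monomial $f^\gamma$ of the resulting circuit.
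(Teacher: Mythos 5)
Your base cases and your first inductive sub-step are fine and match the paper: for $1\leq k\leq m$ the statement reduces to Lemma~\ref{lem:cyclic}, and the observation that every minimal circuit of $\Gamma_{k-1}$ either stays minimal in $\Gamma_k$ or gets split along the chord $e_k$ into shorter circuits (Lemma~\ref{lem:chord}, iterated) is exactly how the paper transports the inductive hypothesis. The problem is the crux step you yourself flag: you never prove that an arbitrary syzygy $\gbf{\sigma}=\sum_j s_j\gbf{\varepsilon}_j$ can be modified by elements of $M_k$ so as to kill $s_k$, i.e.\ that $s_k$ lies in the monomial ideal generated by the quotients $f^{\gamma}/f^{e_k}$ over (minimal) circuits $\gamma$ through $e_k$. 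The vertex equation at $\mu_k$ only says that $s_kf^{e_k}$ equals a signed sum of products $s_jf^{e_j}$ with general polynomial coefficients $s_j$ and possible cancellation among the terms, so ``iterating the two vertex relations along paths'' does not by itself assemble the successive lcm's into the circuit monomials $f^{\gamma}$; this membership statement is essentially the whole content of the theorem (it is what produces the ideals $I_k$ in Proposition~\ref{prop:filtration1}), and your proposal leaves it as a plan with an acknowledged obstacle.

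It is also worth noting that the paper does not attempt your uniform induction for all $k\geq m+1$. For $m+1\leq k\leq 2m-3$ it carries out exactly the reduction you envisage, but only because the very special shape of $\Gamma_k$ there (a long outer path plus a few spokes from vertex $1$) lets one compare coefficients at the vertices $k-m+3,\dots,m$ and then at $k-m+2$, producing explicit divisibilities \eqref{eqn:comparecoeffstwo}--\eqref{eqn:comparecoeffsthree} and the two specific circuits $\gamma_1,\gamma_2$ used to subtract off the offending terms. For $2m-2\leq k\leq n$ this ad hoc reduction is abandoned entirely: the paper checks via the S-pairs and Buchberger's criterion that $\gbf{\beta}_1,\dots,\gbf{\beta}_k$ form a Gr\"obner basis of $F^k$ for the term-over-position order, invokes Schreyer's theorem to get that the syzygies $\gbf{\sigma}_{(i,j)}$, $(i,j)\in\mathbb{B}_k$, generate $\syz(F^k)$, and then identifies these with the minimal-circuit syzygies via Lemma~\ref{lem:mincircuits}. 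If you want to complete your route, you would need either to import that Gr\"obner/Schreyer machinery (at which point your induction is redundant) or to give a genuine combinatorial argument for the monomial membership of $s_k$ (for instance exploiting a fine multigrading to reduce to term coefficients), neither of which appears in the proposal.
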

\begin{proof}
 We distinguish three cases. The first case, in which $1\leq k \leq m-1$,  is straightforward:  the graph $\Gamma_k$ admits no circuits and $\syz(F^k)=0$ by Lemma~\ref{lem:cyclic}, so the result holds.  
 
We prove the second case, in which $m\leq k\leq 2m-3$, by induction. For $k=m$, Lemma~\ref{lem:cyclic} shows that the $S$-module $\syz(F^m)$ is free with basis $\gbf{\sigma}_0$ from \eqref{eqn:sigma0}. The syzygy $\gbf{\sigma}_{\gamma_0}$ associated to the unique minimal circuit $\gamma_0 = (1,2,\dots,m,1)$ in $\Gamma_m$ coincides with $\gbf{\sigma}_0$, so the statement holds for $k=m$. Assume the statement for $\Gamma_{k-1}$ for any $m+1\leq k \leq 2m-3$, and let 
 \[
 \gbf{\sigma} = \sum_{j=1}^k s_j \gbf{\varepsilon}_{j}
 \]
 be a syzygy on $\gbf{\beta}_1,\dots, \gbf{\beta}_{k}$ where $s_1,\dots,s_k\in S$. 
 
 As a first step we reduce to the case in which the coefficients satisfy $s_j=0$ for $k-m+2\leq j\leq m$ (these indices determine the edges to the left of $\gbf{\beta}_k$ in Figure~\ref{fig:mainproof1}(a)).
  \begin{figure}[!ht]
    \centering
      \subfigure[]{
     \centering
 \psset{unit=1.2cm}
 \begin{pspicture}(0,-0.2)(3.4,3.7)
 \psset{linecolor=black}
 \cnodeput(1,0){A}{\tiny{$1$}}
 \cnodeput(2.4142,0){B}{\tiny{$2$}}
 \cnodeput(3.4142,1){C}{\tiny{$3$}} 
 \cnodeput(3.4142,2.4142){D}{\tiny{$4$}}
 \cnodeput(2.4142,3.4142){E}{\tiny{$5$}}
 \cnodeput(1,3.4142){F}{\tiny{$6$}}
  \cnodeput(0,2.4142){G}{\tiny{$7$}}
   \cnodeput(0,1){H}{\tiny{$m$}}  
  \ncline{->}{A}{B}
 \ncline{->}{B}{C}
 \ncline{->}{C}{D}
 \ncline{->}{D}{E}
  \ncline{->}{A}{C}
 \ncline{->}{A}{D}
 \ncline{->}{E}{F}
 \ncline{->}{F}{G}
 \ncline[linestyle=dashed]{->}{G}{H}
 \ncline{->}{A}{H}
 \ncline{->}{A}{E}\lput*{:290}{\small{$\gbf{\beta}_{k}$}}
  \end{pspicture}       }
      \qquad  \qquad
      \subfigure[]{
              \centering
 \psset{unit=1.2cm}
 \begin{pspicture}(0,-0.2)(3.4,3.7)
 \psset{linecolor=black}
 \cnodeput(1,0){A}{\tiny{$1$}}
 \cnodeput(2.4142,0){B}{\tiny{$2$}}
 \cnodeput(3.4142,1){C}{\tiny{$3$}} 
 \cnodeput(3.4142,2.4142){D}{\tiny{$4$}}
 \cnodeput(2.4142,3.4142){E}{\tiny{$5$}}
 \cnodeput(1,3.4142){F}{\tiny{$6$}}
  \cnodeput(0,2.4142){G}{\tiny{$7$}}
   \cnodeput(0,1){H}{\tiny{$m$}}  
   \put(0.9,1.9){$\gamma_1$}
       \put(2.2,1.9){$\gamma_2$}
 \ncline{->}{A}{B}
 \ncline{->}{B}{C}
 \ncline{->}{C}{D}
  \ncline{->}{A}{C}
 \psset{linecolor=black,linewidth=0.04}
 \ncline{->}{E}{F}
 \ncline{->}{F}{G}
 \ncline[linestyle=dashed]{->}{G}{H}
 \ncline{->}{A}{H}
 \ncline{->}{A}{E}
 \ncline{->}{D}{E}
 \ncline{->}{A}{D}
  \end{pspicture}
        }
    \caption{The graph $\Gamma_k$ for $m\leq k\leq 2m-3$ illustrated for $k=m+3$}
  \label{fig:mainproof1}
  \end{figure}
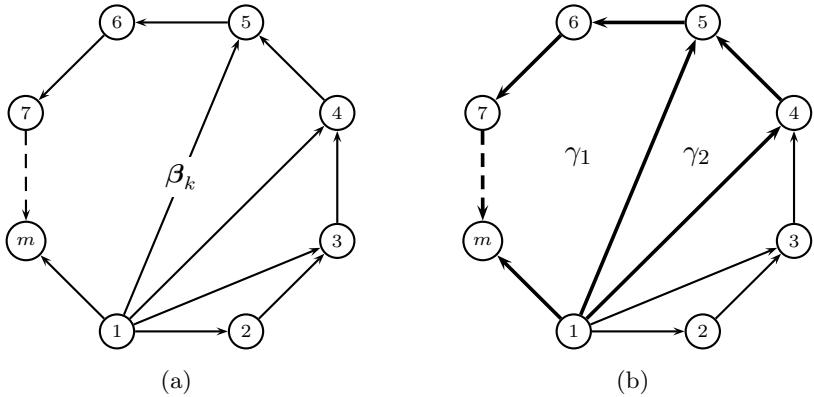
 Indeed, suppose otherwise, so $s_i\neq 0$ for some $k-m+2\leq i\leq m$.  By comparing the coefficient of $\mathbf{e}_\mu$ for each index $k-m+3\leq \mu \leq m$ in the equation
\[
0 = \psi(\gbf{\sigma})=  \sum_{j=1}^{k} s_j \bigg(\frac{f^{\mu_j,\nu_j}}{f^{\nu_j}}\mathbf{e}_{\nu_j}-\frac{f^{\mu_j,\nu_j}}{f^{\mu_j}}\mathbf{e}_{\mu_j}\bigg),
\]
we obtain a collection of equations
\begin{equation}
\label{eqn:comparecoeffstwo}
s_{k-m+2}f^{k-m+2,k-m+3} = s_{k-m+3}f^{k-m+3,k-m+4} = \cdots = s_{m-1}f^{m-1,m} = - s_m f^{1,m}
\end{equation}
which imply that $s_j\neq 0$ for all $k-m+2\leq j\leq m$. As illustrated in Figure~\ref{fig:mainproof1}(b) for $k=m+3$, the circuit $\gamma_1 := (1,k-m+2,k-m+3, \dots, m, 1)$ is minimal in $\Gamma_k$, and it determines both the monomial $f^{\gamma_1}= f^{1, k-m+2,k-m+3,\dots,m}$ and the syzygy
\begin{equation}
\label{eqn:sigma1}
\gbf{\sigma}_{\gamma_1} = -\frac{f^{\gamma_1}}{f^{1,m}}\gbf{\varepsilon}_m + \frac{f^{\gamma_1}}{f^{1,k-m+2}}\gbf{\varepsilon}_k + \sum_{j=k-m+2}^{m-1} \frac{f^{\gamma_1}}{f^{j,j+1}}\gbf{\varepsilon}_j.
\end{equation}
Equations \eqref{eqn:comparecoeffstwo} and the fact that $s_m\neq 0$ imply that $f^{\gamma_1}$ divides $s_m f^{1,m}$, and a straightforward computation shows that 
\[
\gbf{\sigma}_1:= \gbf{\sigma} + \frac{s_mf^{1,m}}{f^{\gamma_1}} \gbf{\sigma}_{\gamma_1} =  \bigg(s_k+\frac{s_mf^{1,m}}{f^{1,k-m+2}}\bigg)\gbf{\varepsilon}_k + \sum_{j=1}^{k-m+1} s_j\gbf{\varepsilon}_j + \sum_{j=m+1}^{k-1} s_j\gbf{\varepsilon}_j.
\]
In particular, if we expand $\gbf{\sigma}_1 =  \sum_{j=1}^k t_j \gbf{\varepsilon}_{j}$ for 
$t_1,\dots,t_k\in S$, then $t_j=0$ for $k-m+2\leq j\leq m$, and it suffices to prove the result for $\gbf{\sigma}_1$ as claimed. 

The second step is to repeat the above, comparing the coefficient of $\mathbf{e}_{k-m+2}$ in the equation $\psi(\gbf{\sigma}_1)=0$, and since $t_{k-m+2}=0$ we obtain 
\begin{equation}
\label{eqn:comparecoeffsthree}
t_{k-m+1}f^{k-m+1,k-m+2} + t_{k}f^{1,k-m+2}  = 0.
\end{equation}
If $t_k\neq 0$ then the minimal circuit $\gamma_2 := (1,k-m+2, k-m+1, 1)$ in $\Gamma_k$ from Figure~\ref{fig:mainproof1}(b) determines both the monomial $f^{\gamma_2}= f^{1, k-m+1,k-m+2}$ and the syzygy
\begin{equation}
\label{eqn:sigma2}
\gbf{\sigma}_{\gamma_2} = \frac{f^{\gamma_2}}{f^{1,k-m+2}}\gbf{\varepsilon}_k - \frac{f^{\gamma_2}}{f^{k-m+1,k-m+2}}\gbf{\varepsilon}_{k-m+1} -  \frac{f^{\gamma_2}}{f^{1,k-m+1}}\gbf{\varepsilon}_{k-1}.
\end{equation}
Equation \eqref{eqn:comparecoeffsthree} implies that $f^{\gamma_2}$ divides $t_{k}f^{1,k-m+2}$ and again, a straightforward computation, this time using equation \eqref{eqn:comparecoeffsthree}, shows that the coefficients of both $\gbf{\varepsilon}_k$ and $\gbf{\varepsilon}_{k-m+1}$ in the syzygy
\[
\gbf{\sigma}_2:= \gbf{\sigma}_1 - \frac{t_k f^{1,k-m+2}}{f^{\gamma_2}} \gbf{\sigma}_{\gamma_2} 
\]
are zero. This means that $\gbf{\sigma}_2\in \syz(F^{k-1})$, and we deduce from the inductive hypothesis that $\gbf{\sigma}_2$ is generated by the elements $\gbf{\sigma}_{\gamma}$ associated to minimal circuits $\gamma$ in $\Gamma_{k-1}$. 
Among all minimal circuits in $\Gamma_{k-1}$, only $\gamma = (1,k-m+1, k-m+2,\dots, m, 1)$ is not minimal in $\Gamma_{k}$; indeed, the edge labelled $\gbf{\beta}_k$ is a chord. However, this edge splits $\gamma$ into the circuits $\gamma_1, \gamma_2$ defined earlier in the current proof that \emph{are} minimal in $\Gamma_k$, and Lemma~\ref{lem:chord} writes $\gbf{\sigma}_\gamma$ as an $S$-linear combination of $\gbf{\sigma}_{\gamma_1}$ and $\gbf{\sigma}_{\gamma_2}$. Thus, the syzygy $\gbf{\sigma}_2$, and hence both $\gbf{\sigma}_1$ and $\gbf{\sigma}$, are generated by the elements $\gbf{\sigma}_{\gamma}$ associated to minimal circuits $\gamma$ in $\Gamma_{k}$. This completes the proof for $m\leq k\leq 2m-3$.

Finally, consider $2m-2\leq k\leq n$. Given any monomial order on $S$, let $>$ denote the term over position order on the free $S$-module $\bigoplus_{\mu=1}^m S\textbf{e}_\mu$, that is, $>$ is the monomial order defined for $g, g^\prime\in S$ and $1\leq \mu, \nu\leq m$ by taking $g^\prime \mathbf{e}_{\nu}>g \mathbf{e}_{\mu}$ if and only if $g^\prime f^{\nu} >g f^{\mu} $ with respect to the monomial order on $S$, or $g^\prime f^{\nu} =g f^{\mu}$ and  $\nu > \mu$. It follows that for $1 \leq j \leq k$, the leading term of $\gbf{\beta}_j$ with respect to this order is $f^{\mu_j,\nu_j}/f^{\nu_j} \mathbf{e}_{\nu_j}$. This implies that the S\emph{-vectors of critical pairs} are the elements
$$
\mathrm{S}(\gbf{\beta}_{i},\gbf{\beta}_{j})=\frac{f^{\mu_i,\mu_j,\nu_j}}{f^{\mu_j,\nu_j}}\gbf{\beta}_{j}-\frac{f^{\mu_i,\mu_j,\nu_j}}{f^{\mu_i,\nu_j}}\gbf{\beta}_{i}
$$
arising from all elements in $\mathbb{B}_k:=\{ (i,j) \mid 1 \leq i < j \leq k, \nu_{i}=\nu_{j}\}$  (see Kreuzer--Robbiano~\cite[Definition~2.5.1]{KR00}). Substituting \eqref{eqn:Beta} into every S-vector ensures that the leading terms cancel by definition. Since any critical pair $(i,j)$ corresponds to a pair of directed edges $(\mu_i,\nu_j)$ and $(\mu_j,\nu_j)$ in $\Gamma_k$, 
 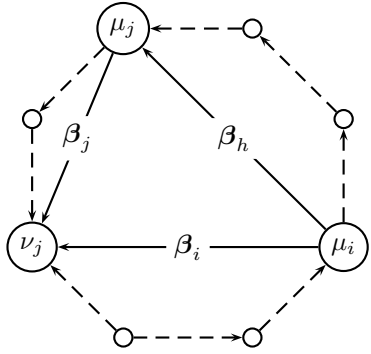
\begin{figure}[!ht]
 \centering
 \psset{unit=1.2cm}
 \begin{pspicture}(0,0)(3.4,3.7)
 \psset{linecolor=black}
 \cnodeput(1,0){A}{}
 \cnodeput(2.4142,0){B}{}
 \cnodeput(3.4142,1){C}{\small{$\mu_i$}} 
 \cnodeput(3.4142,2.4142){D}{}
 \cnodeput(2.4142,3.4142){E}{}
 \cnodeput(1,3.4142){F}{\small{$\mu_j$}}
  \cnodeput(0,2.4142){G}{}
   \cnodeput(0,1){H}{\small{$\nu_j$}}  
  \ncline[linestyle=dashed]{->}{A}{B} 
   \ncline[linestyle=dashed]{->}{B}{C}
    \ncline[linestyle=dashed]{->}{C}{D}
     \ncline[linestyle=dashed]{->}{D}{E}
      \ncline[linestyle=dashed]{->}{E}{F}
       \ncline[linestyle=dashed]{->}{F}{G} 
       \ncline[linestyle=dashed]{->}{G}{H}
        \ncline[linestyle=dashed]{->}{A}{H}      
  \ncline{->}{C}{H}\lput*{:180}{\small{$\gbf{\beta}_i$}}
   \ncline{->}{C}{F}\lput*{:-135}{\small{$\gbf{\beta}_h$}}
    \ncline{->}{F}{H}\lput*{:112.5}{\small{$\gbf{\beta}_j$}}
  \end{pspicture}
 \caption{Minimal circuit in $\Gamma_k$ for $2m-2\leq k\leq n$ where $i<j$.}
  \label{fig:triangle}
 \end{figure}
the S-vector can then be written as a multiple of the generator $\gbf{\beta}_{(\mu_i,\mu_j)}$ corresponding to the third directed edge from Figure~\ref{fig:triangle}. Indeed, if we choose the index $1\leq h\leq k$ so that $\gbf{\beta}_h =  \gbf{\beta}_{(\mu_i,\mu_j)}$, then we compute explicitly that the `standard expressions' are
\[
\mathrm{S}(\gbf{\beta}_{i},\gbf{\beta}_{j})=-\frac{f^{\mu_i, \mu_j, \nu_i}}{f^{\mu_i,\mu_j}}\gbf{\beta}_{h}. 
\]
Moreover, we deduce from Buchberger's Criterion \cite[Theorem 15.8]{Eisenbud95} that $\gbf{\beta}_1,\dots, \gbf{\beta}_k$ are a Gr\"{o}bner basis of $F^k$. Every standard expression determines a syzygy, namely
\begin{equation}
\label{eqn:sigmaij}
\gbf{\sigma}_{(i,j)}=\frac{f^{\mu_i,\mu_j,\nu_j}}{f^{\mu_j,\nu_j}}\gbf{\varepsilon}_{j}-\frac{f^{\mu_i,\mu_j,\nu_j}}{f^{\mu_i,\nu_j}}\gbf{\varepsilon}_{i} + \frac{f^{\mu_i, \mu_j, \nu_i}}{f^{\mu_i,\mu_j}}\gbf{\varepsilon}_{h}. 
\end{equation}
Schreyer's theorem~\cite[Theorem 15.10]{Eisenbud95} implies that the set of syzygies $\{\gbf{\sigma}_{(i,j)} \mid (i,j)\in \mathbb{B}_k\}$ is a system of generators for $\syz(F^k)$. Let $\gamma(i,j):=(\mu_i, \mu_j, \nu_j,\mu_i)$ denote circuit in $\Gamma_k$ obtained by traversing the edges labelled $\gbf{\beta}_h$, $\gbf{\beta}_j$ according to orientation followed by the edge labelled $\gbf{\beta}_i$ against orientation (see Figure~\ref{fig:triangle}). Then $\gbf{\sigma}_{(i,j)}$ coincides with the syzygy $\gbf{\sigma}_{\gamma(i,j)}$ from Lemma~\ref{lem:syzygy}, and the result is a consequence of the following Lemma.
\end{proof}

\begin{lemma}
\label{lem:mincircuits}
For $2m-3\leq k\leq n$, the minimal circuits in the graph $\Gamma_k$ are precisely those of the form $\gamma(i,j)= (\mu_i, \mu_j, \nu_j,\mu_i)$ arising from pairs $(i,j)$ in $\mathbb{B}_k=\{ (i,j) \mid 1 \leq i < j \leq k, \nu_{i}=\nu_{j}\}$.
\end{lemma}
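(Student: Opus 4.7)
The plan is to prove the lemma in two stages: first, to show that every minimal circuit in $\Gamma_k$ is a triangle (which is the hard part); second, to verify the claimed bijection between triangles in $\Gamma_k$ and elements of $\mathbb{B}_k$. The second stage is essentially bookkeeping, so the real effort lies in the first.

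For the first stage, let $C$ be a minimal circuit of length $\ell \geq 3$ and let $V(C)$ denote its vertex set. If $1 \in V(C)$, then since $k \geq 2m-3$ ensures that every edge $(1,w)$ lies in $\Gamma_k$, any vertex of $C$ non-adjacent to $1$ in $C$ would produce a chord, forcing $\ell = 3$. Otherwise, let $h$ be the maximal index of an edge of $C$. Because the non-$1$ adjacent edges form a path on vertices $2, 3, \dots, m$ and contain no cycle, one must have $h \geq 2m-2$, so $\tau_h = (\mu_h,\nu_h)$ is a non-adjacent non-$1$ edge with $\mu_h \geq 2$.

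Setting $v^* = \min V(C)$, I would split further into $v^* < \mu_h$ and $v^* = \mu_h$. In the former case, lexicographic order gives $(v^*,w) <_{\text{lex}} (\mu_h,\nu_h)$ for every non-adjacent non-$1$ edge $(v^*,w)$, so every edge from $v^*$ to a vertex of $V(C)\setminus\{v^*\}$ already lies in $\Gamma_{h-1}$; for $\ell \geq 4$ this yields a chord. In the latter case, I would write $C = (c_1, c_2, \dots, c_\ell, c_1)$ with $c_1 = \mu_h$ and $c_2 = \nu_h$, and apply the same lex-analysis to the edges $(c_2,c_3)$ and $(c_1,c_\ell)$ to force $c_3 \in \{\nu_h-1, \nu_h+1\}$ and $\mu_h < c_\ell < \nu_h$. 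The choice $c_3 = \nu_h+1$ propagates inductively: each subsequent edge must be an upward adjacent edge, giving $c_j = \nu_h + j - 2$, and the cycle can never descend back to $c_\ell < \nu_h$ without using an edge whose $\mu$-coordinate exceeds $\mu_h$, contradicting maximality of $h$. The choice $c_3 = \nu_h - 1$ produces the chord $(\mu_h, \nu_h-1) = (c_1, c_3) \in \Gamma_{h-1}$, which contradicts minimality as soon as $\ell \geq 4$. This last subcase is the most delicate, and is where I expect the bulk of the work.

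For the bijection in the second stage, any triangle with vertex set $\{a,b,c\}$, $a < b < c$, in $\Gamma_k$ has precisely two edges $(a,c)$ and $(b,c)$ sharing $\nu$-coordinate $c$; ordering their indices produces a pair $(i,j) \in \mathbb{B}_k$ whose associated circuit $\gamma(i,j)$ recovers the triangle. Conversely, for $(i,j) \in \mathbb{B}_k$ with $\mu_i < \mu_j$ (after relabelling if necessary), the third edge $(\mu_i,\mu_j)$ is either adjacent, a $1$-edge, or non-adjacent non-$1$ with strictly smaller lex-index than $\tau_j$, so it lies in $\Gamma_{j-1} \subseteq \Gamma_k$, confirming that $\gamma(i,j)$ is indeed a triangle in $\Gamma_k$.
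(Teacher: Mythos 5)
Your proposal is correct in substance, and it takes a noticeably different route from the paper. The paper argues by induction on $k$: the base case $\Gamma_{2m-3}$ is handled by showing that a circuit avoiding the claimed form must use two incident edges of $\Gamma_m$, which forces a chord through vertex $1$; in the inductive step one assumes the newest edge $e_k=(\mu_k,\nu_k)$ lies on the circuit and inspects the unique other circuit edge at $\nu_k$, exhibiting a chord in each of three cases. You dispense with the induction on $k$ entirely: you first show every minimal circuit is a triangle by a direct argument --- if vertex $1$ lies on the circuit, the edges $(1,w)$ (all present since $k\geq 2m-3$) give chords; otherwise you work with the maximal-index edge $\tau_h$ of the circuit (necessarily a lexicographically listed edge with $\mu_h\geq 2$) and the minimal vertex $v^*$, and your case analysis ($v^*<\mu_h$ gives a chord at $v^*$; $v^*=\mu_h$ with $c_3=\nu_h+1$ forces a monotone climb that cannot close without violating maximality of $h$; $c_3=\nu_h-1$ gives the chord $(\mu_h,\nu_h-1)$ once $\ell\geq 4$) checks out --- and only then match triangles with pairs in $\mathbb{B}_k$. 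Both proofs exploit the same structural features of the edge order, but yours is self-contained for a fixed $k$ and has the merit of making explicit the converse inclusion (that each $\gamma(i,j)$ with $(i,j)\in\mathbb{B}_k$ really is a circuit of $\Gamma_k$), which the paper leaves largely implicit.

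That converse step is, however, misstated as written, though the conclusion is true and the repair is short. The third edge $(\mu_i,\mu_j)$ need not lie in $\Gamma_{j-1}$, and after your relabelling its index need not be smaller than that of $\tau_j$: for $m=5$ take $(i,j)=(4,5)\in\mathbb{B}_k$, i.e.\ $\tau_4=(4,5)$ and $\tau_5=(1,5)$; the third edge is $(1,4)=\tau_7\notin\Gamma_4$, and it lies in $\Gamma_k$ only because $k\geq 2m-3=7$. Likewise, for the triangle on $\{2,4,5\}$ the third edge $(2,4)=\tau_8$ has larger index than the relabelled $\tau_j=(4,5)=\tau_4$, though it does precede $\tau_9=(2,5)$. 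The correct bookkeeping: write $a<b<c$ for the triangle's vertices, so the third edge is $(a,b)$. If $a=1$ or $b=a+1$ it lies in $\Gamma_{2m-3}\subseteq\Gamma_k$ --- this is precisely where the hypothesis $k\geq 2m-3$ is needed. Otherwise $(a,b)$ is a lexicographically listed edge, and so is $(a,c)$, which is one of $\tau_i,\tau_j$; since $(a,b)$ precedes $(a,c)$ in the chosen order, the third edge lies in $\Gamma_{\max(i,j)-1}\subseteq\Gamma_k$. With that adjustment your argument is complete.
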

\begin{proof}
We proceed by induction. Let $\gamma$ be a minimal circuit in $\Gamma_{2m-3}$ that is not of the form $\gamma(i,j)$ for any $(i,j)\in\mathbb{B}_{2m-3}$. Since $\gamma$ is a circuit, it must traverse an edge $e$ of the subgraph $\Gamma_m$, and since $\gamma\neq \gamma(i,j)$, then either the edge that follows $e$ in $\gamma$, or that preceding $e$ in $\gamma$, must lie in $\Gamma_m$. In either case, $\gamma$ traverses two edges from $\Gamma_m$ that share a common vertex $\mu$. The special nature of $\Gamma_{2m-3}$ then forces the edge $(1,\mu)$ to be a chord of $\gamma$, a contradiction. Assume now that the result holds for $\Gamma_{k-1}$ and let $\gamma$ be a minimal circuit in $\Gamma_{k}$ that is not of the form $\gamma(i,j)$ for any $(i,j)\in\mathbb{B}_k$. If the edge $e_k=(\mu_k,\nu_k)$ does not lie in $\supp(\gamma)$ then the result holds by induction, so we suppose otherwise. Let $e$ be the unique edge in $\supp(\gamma)\setminus \{e_k\}$ that has $\nu_k$ as a vertex. There are three cases: 
\begin{enumerate}
\item[\one] $e=(\nu_k-1,\nu_k)$, in which case $(\mu_k,\nu_{k}-1)$ is a chord because $\gamma\neq \gamma(\nu_{k}-1,k)$;  
\item[\two] $e=(\nu_k,\nu_k+1)$, in which case $\gamma$ must pass through a vertex of the form $1\leq \mu \leq \mu_k$ since it is a circuit, but then $(\mu,\nu_k)$ is a chord;
\item[\three] $e=(\mu,\nu_k)$ for some $1\leq \mu < \mu_k$. Since $\gamma\neq \gamma(j,k)$ for any $j< k$, the circuit $\gamma$ must pass through another vertex of the form $1\leq \mu^\prime < \mu_k$, but then $(\mu^\prime,\nu_k)$ is a chord.
\end{enumerate}
Thus, the minimal circuit $\gamma$ cannot exist. 
\end{proof}

\begin{remark} 
\begin{enumerate}
\item If for $2m-2\leq k\leq n$ we draw the vertices of $\Gamma_k$ spaced evenly around a circle centred at the origin in $\RR^2$, then each minimal circuit $\gamma$ has length three and hence determines a triangle as in Figure~\ref{fig:triangle}. In the spirit of the Taylor resolution of a monomial ideal (see, for example, Bayer--Peeva--Sturmfels \cite{BPS98}), the triangle can be viewed as a 2-cell that defines $f^{\mu_i,\mu_j,\nu_j}$, and the edges are 1-cells defining $f^{\mu_i,\mu_j}, f^{\mu_i,\nu_j}$ and $f^{\mu_j,\nu_j}$. The coefficients of the syzygy $\gbf{\sigma}_{(i,j)}$ are then simply the quotients of the monomial for the 2-cell divided by the monomial for the corresponding 1-cell. An analogous statement holds for $m\leq k\leq 2m-3$, where the syzygies $\gbf{\sigma}_0$ and $\gbf{\sigma}_1$ from the proofs of Lemma~\ref{lem:cyclic} and Theorem~\ref{thm:main1} respectively define polygons with more than three sides. 
\item We emphasise that our choice of order on  the set of transpositions of $m$ letters is imposed on us by the geometry: the filtration in Proposition~\ref{prop:filtration2} below requires that  the $S$-module $F^k$ contains $F^0$ for $1\leq k\leq n$. Without this constraint one could choose an alternative order in which each minimal circuit of $\Gamma_k$ for $m \leq k \leq n$ determines a triangle, leading to a more unified proof of Theorem~\ref{thm:main1}. Indeed, since $f^1,\dots, f^m$ are monomials, the modules $\syz(F^k)$ can be read off directly from the Taylor resolution for $1\leq k\leq m$.
\end{enumerate}
\end{remark}

As an application of Theorem~\ref{thm:main1}, we introduce a filtration of the module $S$-module $\ker(\varphi)=\syz(M)$ that feeds into the proof of our main result. For $1\leq k\leq n$, the $S$-modules $F^k$ define a filtration 
\[
0\subseteq F^1\subseteq F^2\subseteq \cdots \subseteq F^{n-1}\subseteq F^n=\syz(M)
\]
in which the successive quotients are cyclic $S$-modules
\begin{equation}
\label{eqn:betakquotient}
\frac{F^k}{F^{k-1}} \cong  \frac{\langle \gbf{\beta}_k \rangle}{\langle\gbf{\beta}_1,\dots, \gbf{\beta}_{k-1} \rangle \cap \langle \gbf{\beta}_k \rangle}.
\end{equation}
The next result gives an explicit description of these quotient modules.

\begin{proposition}
\label{prop:filtration1}
For each $1\leq k\leq n$, the quotient $F^k/F^{k-1}$ is isomorphic to the cyclic $S$-module $S/I_k$, where the monomial ideal $I_k$ depends on $k$ as follows:
  \begin{enumerate}
\item[\one] for $1\leq k\leq m-1$, the ideal $I_k$ is the zero ideal;
\item[\two] for $k=m$, the ideal $I_k$ is principal with generator $f^{1,\dots,m}/f^{1,m}$;
\item[\three] for $m+1\leq k\leq 2m-3$, the ideal is
\[
I_k=\bigg\langle \frac{f^{1,k-m+2,k-m+3,\dots, m}}{f^{1,k-m+2}}, \frac{f^{1,k-m+1,k-m+2}}{f^{1,k-m+2}}\bigg\rangle;
\]
\item[\four] for $2m-2 \leq k\leq n$, the corresponding transposition is $\tau_k=(\mu_k,\nu_k)$, and the ideal is
\[
I_k = \bigg\langle \frac{f^{\mu,\mu_k,\nu_k}}{f^{\mu_k,\nu_k}} \: \bigg\vert \: \mu\in \{1, \dots, \mu_k-1\}\cup\{\nu_k-1\}\bigg\rangle.
\]
  \end{enumerate}
\end{proposition}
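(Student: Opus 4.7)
The plan is to identify $I_k$ with the ideal of coefficients of $\gbf{\varepsilon}_k$ that appear in elements of $\syz(F^k)$, and then read off explicit generators using Theorem~\ref{thm:main1}. Since $\gbf{\beta}_k$ is a nonzero element of the torsion-free module $\bigoplus_\mu S\mathbf{e}_\mu$, the submodule $\langle\gbf{\beta}_k\rangle$ is free of rank one, so \eqref{eqn:betakquotient} identifies $F^k/F^{k-1}$ with $S/I_k$, where $I_k:=\{s\in S:s\gbf{\beta}_k\in\langle\gbf{\beta}_1,\dots,\gbf{\beta}_{k-1}\rangle\}$. An element $s$ lies in $I_k$ precisely when $s\gbf{\varepsilon}_k-\sum_{j<k}t_j\gbf{\varepsilon}_j\in\syz(F^k)$ for some $t_1,\dots,t_{k-1}\in S$; equivalently, $I_k$ is the image of $\syz(F^k)$ under the projection to the $\gbf{\varepsilon}_k$-coordinate. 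By Theorem~\ref{thm:main1} the ideal $I_k$ is therefore generated by the $\gbf{\varepsilon}_k$-coefficients of the syzygies $\gbf{\sigma}_\gamma$ as $\gamma$ ranges over the minimal circuits of $\Gamma_k$, and only those circuits traversing the edge $e_k$ contribute nontrivially.

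With this reduction in hand, each of the four cases amounts to identifying the minimal circuits of $\Gamma_k$ that traverse $e_k$ and reading off the corresponding coefficients. Case \one is immediate, since Lemma~\ref{lem:cyclic} gives $\syz(F^k)=0$. For Case \two, the generator $\gbf{\sigma}_0=\gbf{\sigma}_{\gamma_0}$ from \eqref{eqn:sigma0} has $\gbf{\varepsilon}_m$-coefficient $-f^{1,\dots,m}/f^{1,m}$, yielding the stated principal ideal. For Case \three, the inductive step in the proof of Theorem~\ref{thm:main1} already identifies the two minimal circuits through $e_k$ as $\gamma_1$ and $\gamma_2$, and formulas \eqref{eqn:sigma1} and \eqref{eqn:sigma2} record their $\gbf{\varepsilon}_k$-coefficients as precisely the two monomials in the statement.

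For Case \four, Lemma~\ref{lem:mincircuits} presents the minimal circuits of $\Gamma_k$ as the triangles $\gamma(i,j)$ with $(i,j)\in\mathbb{B}_k$, and formula \eqref{eqn:sigmaij} records the $\gbf{\varepsilon}_k$-coefficient of each $\gbf{\sigma}_{(i,j)}$. The main obstacle---and the bulk of the remaining work---is the combinatorial task of identifying exactly which triangles traverse $e_k$. Since $i<j\leq k$, one rules out the possibility that $e_k$ coincides with the third edge $(\mu_i,\mu_j)$ of a triangle with $i,j<k$: that would force $\nu_i=\nu_j>\nu_k$ and $\mu_i=\mu_k$, so $\tau_i=(\mu_k,\nu_i)$ would follow $\tau_k$ in lex order, giving $i>k$, a contradiction. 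Thus the contributing triangles have $j=k$, and enumerating them amounts to listing the edges $(\mu,\nu_k)$ of $\Gamma_k$ distinct from $e_k$. A direct check against the ordering on transpositions shows that every $\mu<\mu_k$ arises (from either $\tau_1,\dots,\tau_{m-1}$, $\tau_m=(1,m)$, one of the edges $(1,v)$ indexed between $m+1$ and $2m-3$, or from the lex segment preceding $\tau_k$), while for $\mu>\mu_k$ only the adjacency edge $(\nu_k-1,\nu_k)$ has second vertex $\nu_k$, and it is distinct from $e_k$ because $\tau_k$ is not an adjacency transposition when $k\geq 2m-2$. This yields the index set $\mu\in\{1,\dots,\mu_k-1\}\cup\{\nu_k-1\}$ in the stated generating set.
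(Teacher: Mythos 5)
Your argument is correct and follows essentially the same route as the paper: both identify $I_k$ with the ideal generated by the $\gbf{\varepsilon}_k$-coefficients of a generating set of $\syz(F^k)$ (the paper cites Kreuzer--Robbiano~\cite[Proposition~3.2.3]{KR00} where you argue directly via \eqref{eqn:betakquotient} and the projection to the $\gbf{\varepsilon}_k$-coordinate), and then both read those coefficients off from Lemma~\ref{lem:cyclic}, \eqref{eqn:sigma0}, \eqref{eqn:sigma1}, \eqref{eqn:sigma2} and \eqref{eqn:sigmaij} using Theorem~\ref{thm:main1} and Lemma~\ref{lem:mincircuits}; indeed you are slightly more careful than the paper in case \four, where the possibility that $e_k$ is the third edge of a triangle $\gamma(i,j)$ with $i<j<k$ is ruled out explicitly. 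One minor imprecision there: $i<j$ does not force $\mu_i<\mu_k$ to pair with $\mu_j=\nu_k$ (the adjacency edge into $\nu_j$ precedes the lex edges, so one could have $\mu_i=\nu_k$ and $\mu_j=\mu_k$), but the identical lex comparison applied to whichever of $\tau_i,\tau_j$ has first entry $\mu_k$ yields the same contradiction, so your conclusion stands.
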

\begin{proof}
For $1 \leq k \leq n$, let $\{\gbf{\sigma}_1,\dots, \gbf{\sigma}_r\}$ be a set of generators for the $S$-module $\syz(F^k)$. If we write $\gbf{\sigma}_{\nu}=\sum_{j=1}^k s_{\nu j}\gbf{\varepsilon}_j$ with $s_{\nu 1},\dots,s_{\nu k} \in S$ for $1 \leq \nu \leq r$, then \cite[Proposition~3.2.3]{KR00} implies that the coefficients $s_{1k},\dots, s_{rk}$ of $\gbf{\varepsilon}_k$ give the generators $s_{1k}\gbf{\beta}_k,\dots, s_{rk}\gbf{\beta}_k$ of the $S$-module $\langle\gbf{\beta}_1,\dots, \gbf{\beta}_{k-1} \rangle \cap \langle\gbf{\beta}_k \rangle$, so we obtain   
\[
\frac{F^k}{F^{k-1}} \cong \frac{S}{\langle s_{1k},\dots, s_{rk} \rangle}. 
\]
It remains to compute $I_k:=\langle s_{1k},\dots, s_{rk} \rangle$. Parts \one\ and \two\ now follow from Lemma~\ref{lem:cyclic} and equation \eqref{eqn:sigma0}. For part \three, the proof of Theorem~\ref{thm:main1} shows that the only minimal circuits $\gamma$ in $\Gamma_k$ with $m+1\leq k\leq 2m-3$ for which the associated syzygy $\gbf{\sigma}_\gamma$ has a nonzero coefficient for $\gbf{\varepsilon}_k$ are $\gamma_1 := (1,k-m+2,k-m+3, \dots, m, 1)$ and $\gamma_2 := (1,k-m+2, k-m+1, 1)$. These nonzero coefficients are presented in equations \eqref{eqn:sigma1} and \eqref{eqn:sigma2}, namely
\[
\frac{f^{\gamma_1}}{f^{1,k-m+2}} = \frac{f^{1,k-m+2,k-m+3,\dots, m}}{f^{1,k-m+2}}\quad\text{and}\quad \frac{f^{\gamma_2}}{f^{1,k-m+2}}= \frac{f^{1,k-m+1,k-m+2}}{f^{1,k-m+2}}.
\]
For part \four, we deduce from Theorem~\ref{thm:main1} and Lemma~\ref{lem:mincircuits} that $\syz(F^k)$ is generated by the syzygies $\gbf{\sigma}_{(i,j)} = \gbf{\sigma}_{\gamma(i,j)}$ associated to pairs $(i,j)\in \mathbb{B}_k$. By equation \eqref{eqn:sigmaij}, such syzygies have a nonzero coefficient of $\gbf{\varepsilon}_k$ if and only if $(i,j)=(i,k)$ for those $1\leq i < k$ satisfying $\nu_i=\nu_k$. The $i$th edge $(\mu_i,\nu_i)$ in $\Gamma_k$ has $\nu_i=\nu_k$ if and only if $\mu_i \in \{1, \dots, \mu_k-1\}\cup\{\nu_k-1\}$, that is, we must consider all pairs of the form $(\mu,\nu_k)$ for $\mu\in \{1, \dots, \mu_k-1\}\cup\{\nu_k-1\}$. Equation \eqref{eqn:sigmaij} shows that the coefficient of $\gbf{\varepsilon}_k$ in this case is $f^{\mu,\mu_k,\nu_k}/f^{\mu_k,\nu_k}$ as required.
\end{proof}

\begin{remark}
The generators of $I_k$ listed in Proposition~\ref{prop:filtration1} need not be minimal for $m+1\leq k\leq n$. For example (though not the simplest), a straightforward calculation for the module $M$ over $S=\kk[x_1,\dots, x_7]$ with generators 
\[
f^1=x_1x_6,\; f^2= x_1x_2x_7,\; f^3=x_2x_3, \; f^4=x_3x_4, \; f^5=x_4x_5x_7, \; f^6= x_5x_6
\]
gives $I_{k}=S$ for $k=9,10,12,13$. Thus, $I_k$ is principal even though this ideal is listed as having more than one generator in Proposition~\ref{prop:filtration1}.
\end{remark}

\section{Cohomology of wheels on toric varieties}
\label{sec:cohomologyWheels}
Let $X$ be a normal variety over $\kk$. The divisor class group $\Cl(X)$ is defined to be the group of linear equivalence classes of Weil divisors on $X$. Since $X$ is normal, two divisors $D$ and $D^\prime$ are linearly equivalent if and only if the associated rank-one reflexive sheaves $\mathscr{O}_X(D)$ and $\mathscr{O}_X(D^\prime)$ are isomorphic. We may therefore identify elements of the class group of $X$ with (isomorphism classes of) sheaves of the form $\mathscr{O}_X(D)$. In particular, for a Cartier divisor $D$ on $X$ defining an invertible sheaf $L:=\mathscr{O}_X(D)$, we sometimes write $L\in \Cl(X)$.

Let $X$ be a normal toric variety over $\kk$ defined by a fan $\Sigma$ in the real vector space $N\otimes_\ZZ \RR$ with underlying lattice $N$ of rank $n$. Write $\Sigma(1)$ for the set of one-dimensional cones in $\Sigma$, set $d:=\vert \Sigma(1)\vert$, and let $v_\rho\in N$ denote the primitive lattice point on the cone $\rho$. Each $\rho\in \Sigma(1)$ determines a torus-invariant Weil divisor $D_\rho$ in $X$, and we let $\ZZ^d$ denote the free abelian group of torus-invariant Weil divisors. Assume that $X$ has no torus factors. The map $\deg\colon \ZZ^d\to \Cl(X)$ sending $D$ to the sheaf $\mathscr{O}_X(D)$ fits into a short exact sequence of abelian groups
 \[
 0 \xlongrightarrow{} M \xlongrightarrow{\div}  \ZZ^d \xlongrightarrow{\deg} \Cl(X)\xlongrightarrow{} 0,
 \]
 where $M$ is the lattice dual to $N$ and where $m\in M$ maps to $\div(m)=\sum_{\rho\in \Sigma(1)} \langle m,v_\rho\rangle D_\rho$. The restriction of the map $\deg\colon \ZZ^d\to \Cl(X)$ to the subsemigroup $\NN^d$ defines a $\Cl(X)$-grading of the \emph{Cox ring} of $X$ which is the semigroup ring $S:= \kk[x_1,\dots,x_d]$ of $\NN^d$. Explicitly, the degree of a monomial $\prod_{\rho\in \Sigma(1)} x_\rho^{a_\rho}\in S$ is $\mathscr{O}_X(\sum_{\rho\in \Sigma(1)} a_\rho D_\rho)\in \Cl(X)$. Armed with this $\Cl(X)$-grading of the ring $S$, Cox~\cite[Proposition~3.1]{Cox95} introduced an exact covariant functor 
\begin{equation}
\label{eq:Cox}
\{\Cl(X)\text{-graded }S\text{-modules}\} \longrightarrow \{\text{quasicoherent }\mathscr{O}_X\text{-modules}\} \; :\; F\longmapsto \widetilde{F}
\end{equation}
from the category of $\Cl(X)$-graded $S$-modules to the category of quasi-coherent sheaves on $X$, and Musta{\c{t}}{\u{a}}~\cite[Theorem~1.1]{Mustata02} subsequently showed that the functor is essentially surjective, i.e., that every quasi-coherent sheaf (up to isomorphism) on $X$ lies in the image of this functor. If $X$ is smooth, two such graded modules determine isomorphic sheaves if and only if they agree upto saturation by Cox's irrelevant ideal $B = (\prod_{\rho\not\subset \sigma} x_\rho \mid \sigma\in \Sigma)$, but we do not use this fact (until Remark~\ref{rem:hex}). The important point for us is that the functor enables us to lift a complex of quasi-coherent sheaves on $X$ to obtain a complex of $\Cl(X)$-graded $S$-modules which we can study, and then push down again to the original complex of sheaves. 

\medskip

As described in the introduction, our primary motivation is to study four-term complexes $T^\bullet$ on $X$ of the form  
\eqref{eqn:Tbullet} for some integer $m \geq 2$. In fact, we take as the primary object of study the corresponding diagram of torus-equivariant maps between invertible sheaves on $X$: 
\begin{equation}
\label{eqn:diagrammaintext}
\begin{split}
    \centering    
         \psset{unit=0.45cm}
     \begin{pspicture}(0,-1)(25,13.7)
\cnodeput*(0,6){A}{$L$} 
\cnodeput*(8,12){B}{$L_{1,2}$}
\cnodeput*(8,9){C}{$L_{2,3}$} 
\cnodeput*(8,6){D}{$L_{3,4}$}
\cnodeput*(8,3.2){S}{$\vdots$}
\cnodeput*(8,0){E}{$L_{m,1}$}
\cnodeput*(18,12){F}{$L_{1}$}
\cnodeput*(18,9){G}{$L_{2}$}
\cnodeput*(18,6){H}{$L_{3}$}
\cnodeput*(18,3.2){T}{$\vdots$}
\cnodeput*(18,0){I}{$L_{m}$}
\cnodeput*(26,6){J}{$L.$}
\psset{nodesep=1pt}
   \ncline{->}{A}{B}\lput*{:U}(0.6){$\scriptstyle{D_{1,2}}$}
   \ncline{->}{A}{C}\lput*{:U}(0.6){$\scriptstyle{D_{2,3}}$}
   \ncline{->}{A}{D}\lput*{:U}(0.6){$\scriptstyle{D_{3,4}}$}
   \ncline{->}{A}{E}\lput*{:U}(0.6){$\scriptstyle{D_{m,1}}$}
 \ncline{->}{B}{F}\lput*{:U}(0.4){$\scriptstyle{D^2_1}$}
  \ncline{->}{B}{G}\lput*{:U}(0.4){$\scriptstyle{D^1_2}$}
  \ncline{->}{C}{G}\lput*{:U}(0.4){$\scriptstyle{D^3_2}$}
  \ncline{->}{C}{H}\lput*{:U}(0.4){$\scriptstyle{D^2_3}$}
\ncline{->}{D}{H}\lput*{:U}(0.4){$\scriptstyle{D^4_3}$}
\ncline{->}{E}{I}\lput*{:U}(0.4){$\scriptstyle{D^1_m}$}
\nccurve[angleA=-40,angleB=140]{->}{E}{F}\lput*{:U}(0.4){$\scriptscriptstyle{D^m_1}$}
 \ncline{->}{F}{J}\lput*{:U}(0.4){$\scriptstyle{D^{1}}$}
   \ncline{->}{G}{J}\lput*{:U}(0.4){$\scriptstyle{D^{2}}$}
   \ncline{->}{H}{J}\lput*{:U}(0.4){$\scriptstyle{D^{3}}$}
   \ncline{->}{I}{J}\lput*{:U}(0.4){$\scriptstyle{D^{m}}$}
       \end{pspicture}
 \end{split}
 \end{equation}

\noindent Every torus-equivariant map is multiplication by a torus-invariant section of an invertible sheaf on $X$, and we illustrate on each arrow the torus-invariant Cartier divisor of zeros of the corresponding section. Thus, for example, the effective divisor $D^1_{2}\in H^0(L_2\otimes L_{1,2}^{-1})\cong \Hom(L_{1,2},L_2)$ denotes the Cartier divisor of zeros of the section that defines the map from $L_{1,2}$ to $L_2$. One can think of any such diagram as a representation of a quiver (arising as the skeleton of a three-dimensional rhombic polyhedron) in the category of invertible sheaves on $X$. 

Throughout, we impose relations on this quiver, whereby each of the two-dimensional rhombic faces of this quiver forms a commutative square, i.e.
\begin{align}
D^j_{j+1}+D^{j+1} & = D_j^{j+1}+D^j , \label{eqn:relations1}\\
D^{j-1}_j+D_{j-1,j} & = D_j^{j+1}+D_{j,j+1},\label{eqn:relations2}
 \end{align}
 for $1\leq j\leq m$ (working modulo $m$, with indices in the range $1,\dots, m$). We now describe how a diagram of the form  \eqref{eqn:diagrammaintext} gives rise to a complex of $\Cl(X)$-graded $S$-modules precisely when \eqref{eqn:relations1} and \eqref{eqn:relations2} hold. Indeed, let $S(L)$ denote the free $S$-module with generator $\textbf{e}_L$ in degree $L$, and for $1\leq j\leq m$ let $S(L_j)$ and $S(L_{j,j+1})$ denote the free $S$-modules with generators $\textbf{e}_j$ in degree $L_j$ and $\textbf{e}_{j,j+1}$ in degree $L_{j,j+1}$ respectively. In addition, let $f^j$, $f^j_{j+1}$, $f^{j+1}_j$, $f_{j,j+1}$ denote the monomials in the Cox ring $S$ whose divisors of zeroes are the torus-invariant Cartier divisors $D^j$, $D^j_{j+1}$, $D^{j+1}_{j}$, $D_{j,j+1}$ from \eqref{eqn:diagrammaintext}. Consider the sequence of $\Cl(X)$-graded $S$-modules
\begin{equation}
\label{eqn:complexSmods}
S(L) \xlongrightarrow{\varphi^3}  \bigoplus_{j=1}^m S(L_{j,{j+1}}) \xlongrightarrow{\varphi^2} \bigoplus_{j=1}^m S(L_j) \xlongrightarrow{\varphi^{1}} S(L),
\end{equation}
with maps
\[
\varphi^{3}(\textbf{e}_{L}) = \sum_{j=1}^m f_{j,j+1} \textbf{e}_{j,j+1}, \quad
\varphi^{2}(\textbf{e}_{j,j+1}) = f^j_{j+1}\textbf{e}_{j+1} - f^{j+1}_j \textbf{e}_j,\quad
\varphi^{1}(\textbf{e}_j) = f^j\textbf{e}_L.
\]
We claim that the sequence \eqref{eqn:complexSmods} is a complex if and only if the relations \eqref{eqn:relations1} and \eqref{eqn:relations2} hold. Indeed, \eqref{eqn:complexSmods} is a complex if and only if we have
\[
(\varphi^2 \circ \varphi^3)(\textbf{e}_{L})=0\quad \text{and}\quad (\varphi^1 \circ \varphi^2)(\textbf{e}_{j,j+1})=0 \text{ for }1\leq j\leq m,
\]
which is the case if and only if $f^j_{j+1}f^{j+1}-f^{j+1}_j f^j=0$ and $f^{j-1}_j f_{j-1,j}-f^{j+1}_j f_{j,j+1}=0$ for all $1\leq j\leq m$, and these equations hold if and only if \eqref{eqn:relations1} and \eqref{eqn:relations2} hold for all $1\leq j\leq m$.  In summary, the diagram \eqref{eqn:diagrammaintext} of invertible sheaves in which the relations \eqref{eqn:relations1} and \eqref{eqn:relations2} hold determines a complex of $\Cl(X)$-graded $S$-modules of the form \eqref{eqn:complexSmods}. Conversely, to any complex of the form \eqref{eqn:complexSmods}, one can reverse this procedure to obtain a diagram \eqref{eqn:diagrammaintext} of invertible sheaves on $X$ in which the relations \eqref{eqn:relations1} and \eqref{eqn:relations2} hold.

Applying the exact functor \eqref{eq:Cox} to the complex \eqref{eqn:complexSmods} of $\Cl(X)$-graded $S$-modules determines a complex $T^\bullet$ of locally free sheaves on $X$ of the form
$$
L \xlongrightarrow{d^3}  \bigoplus_{j=1}^m L_{j,{j+1}} \xlongrightarrow{d^2} \bigoplus_{j=1}^m L_j \xlongrightarrow{d^1} L,
$$
 where each differential is torus-equivariant, and where the right-hand copy of $L$ lies in degree zero. Moreover, for each $1\leq j\leq m$ the restriction of the differential $d^2$ to the summand $L_{j,j+1}$ has image in $L_j\oplus L_{j+1}$ (with indices modulo $m$). This is the \emph{total chain complex} $T^{\bullet}$ of the diagram \eqref{eqn:diagrammaintext}. The complexes studied by Cautis--Logvinenko~\cite{CL09}, Cautis--Craw--Logvinenko~\cite{CCL12} and Bocklandt--Craw--Quintero-V\'{e}lez~\cite{BCQ12} that motivated our main result all take this form. The invertible sheaves at the left and right of diagram \eqref{eqn:diagrammaintext} coincide, so the sheaves and the maps between them in diagram \eqref{eqn:diagrammaintext} can be represented equally well in a planar picture as in Figure~\ref{fig:wheel}; we call this the \emph{wheel} of invertible sheaves on $X$. 
\begin{figure}[!ht]
\centering    
         \psset{unit=0.45cm}
     \begin{pspicture}(-15,-8)(15,9.5)
\cnodeput*(0,0){A}{$L$} 
\cnodeput*(8,0){B}{$L_{1}$}
\cnodeput*(7,4){C}{$L_{1,2}$} 
\cnodeput*(4,7){D}{$L_{2}$}
\cnodeput*(0,8){E}{$L_{2,3}$}
\cnodeput*(-4,7){F}{$L_{3}$}
\cnodeput*(-7,4){G}{$L_{3,4}$}
\cnodeput*(-8,0){H}{$L_{4}$}
\cnodeput*(-7,-4){I}{\;}
\cnodeput*(-4,-7){J}{\;\;\;\;\;}
\cnodeput*(0,-8){K}{\;}
\cnodeput*(4,-7){L}{$L_{m}$}
\cnodeput*(7,-4){M}{$L_{m,1}$}
\psset{nodesep=1pt}
\ncline[linewidth=.6mm,linestyle=dotted,dotsep=15pt]{-}{I}{J}
\ncline[linewidth=.6mm,linestyle=dotted,dotsep=15pt]{-}{K}{J}  
   \ncline{<-}{A}{B}\lput*{:U}(0.6){$\scriptstyle{D^1}$}
   \ncline{<-}{A}{D}\lput*{:-60}(0.6){$\scriptstyle{D^2}$}
     \ncline{<-}{A}{F}\lput*{:-120}(0.6){$\scriptstyle{D^3}$}
       \ncline{<-}{A}{H}\lput*{:-180}(0.6){$\scriptstyle{D^4}$}
         \ncline{<-}{A}{L}\lput*{:60}(0.6){$\scriptstyle{D^m}$}
    \ncline{->}{A}{C}\lput*{:-30}(0.6){$\scriptstyle{D_{1,2}}$}
      \ncline{->}{A}{E}\lput*{:-90}(0.6){$\scriptstyle{D_{2,3}}$}
        \ncline{->}{A}{G}\lput*{:-150}(0.6){$\scriptstyle{D_{3,4}}$}
            \ncline{->}{A}{M}\lput*{:30}(0.6){$\scriptstyle{D_{m,1}}$}
    \ncline{<-}{B}{C}\bput*{:-105}{$\scriptstyle{D^2_1}$} 
     \ncline{->}{C}{D}\bput*{:-135}{$\scriptstyle{D^1_2}$}         
    \ncline{<-}{D}{E}\bput*{:-165}{$\scriptstyle{D^3_2}$}         
     \ncline{->}{E}{F}\bput*{:-195}{$\scriptstyle{D^2_3}$}           
 \ncline{<-}{F}{G}\bput*{:-225}{$\scriptstyle{D^4_3}$}  
     \ncline{->}{G}{H}\bput*{:-255}{$\scriptstyle{D^3_4}$}   
      \ncline{<-}{H}{I}  
 \ncline{->}{K}{L}    
     \ncline{<-}{L}{M}\bput*{:-45}{$\scriptstyle{D^1_m}$}   
      \ncline{->}{M}{B}  \bput*{:-75}{$\scriptstyle{D^m_1}$} 
        \end{pspicture}
    \caption{Wheel of invertible sheaves on $X$}
     \label{fig:wheel}
    \end{figure}
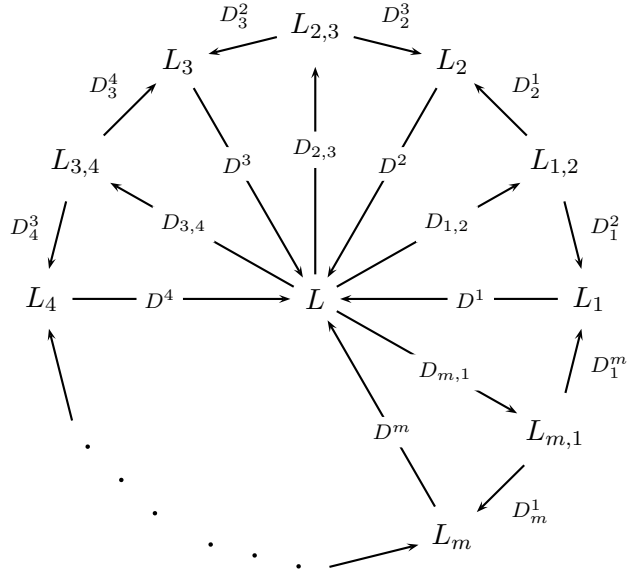

We now use the results of the previous section to compute the cohomology of the complex $T^{\bullet}$. For this purpose, we first note that the map $\varphi^1$ is of the form considered in the preceding section, so we may list the generators of its kernel in a sequence $\gbf{\beta}_1, \dots,\gbf{\beta}_n$ with $n=\binom{m}{2}$. We also list the generators of the image of $\varphi^2$ as 
\[
\gbf{\alpha}_j:=  f^j_{j+1}\textbf{e}_{j+1} - f^{j+1}_j \textbf{e}_j
\]  
for $1\leq j\leq m$. The next proposition is central to the main result of this paper.

\begin{proposition}
\label{prop:filtration2}
The $S$-modules
\[
F^k = \begin{cases} \langle \gbf{\beta}_1, \dots,\gbf{\beta}_k,\gbf{\alpha}_{k+1}, \dots,\gbf{\alpha}_m\rangle & \mbox{for } 1\leq k \leq m, \\  \langle \gbf{\beta}_1, \dots,\gbf{\beta}_m,\gbf{\beta}_{m+1}, \dots,\gbf{\beta}_j\rangle & \mbox{for } m+1\leq j\leq n, \end{cases}
\]
define a filtration 
\[
\im(\varphi^2)=F^0 \subseteq F^1\subseteq \cdots \subseteq F^{n-1}\subseteq F^n=\ker(\varphi^1).
\]
Moreover, for $1 \leq k \leq n$ and for the transposition is $\tau_k=(\mu_k,\nu_k)$, the quotient $F^k/F^{k-1}$ is isomorphic to the cyclic $\Pic(X)$-graded $S$-module $(S/I_k)(L_{\mu_k}\otimes L_{\nu_k}\otimes L^{-1}(\gcd(D^{\mu_k},D^{\nu_k})))$, where the monomial ideal $I_k$ depends on $k$ as follows:
\begin{enumerate}
\item[(1)]for $1 \leq k \leq m$, the ideal is
\[
I_k = \bigg\langle \gcd(f^k_{k+1},f^{k+1}_k), \frac{\lcm(f^{1,\dots,m},\gcd(f^{k+1}_{k+2},f^{k+2}_{k+1}),\dots,\gcd(f^{m}_{1},f^{1}_{m}) )}{f^{k,k+1}} \bigg\rangle;
\]
\item[(2)]for $m+1 \leq k \leq 2m-3$, the ideal is
\[
I_k=\bigg\langle \frac{f^{1,k-m+2,k-m+3,\dots, m}}{f^{1,k-m+2}}, \frac{f^{1,k-m+1,k-m+2}}{f^{1,k-m+2}}\bigg\rangle;
\]
\item[(3)] for $2m-2 \leq k\leq n$, the ideal is
\[
I_k = \bigg\langle \frac{f^{\mu,\mu_k,\nu_k}}{f^{\mu_k,\nu_k}} \: \bigg\vert \: \mu\in \{1, \dots, \mu_k-1\}\cup\{\nu_k-1\}\bigg\rangle.
\]
\end{enumerate}
\end{proposition}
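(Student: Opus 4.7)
The plan is to organize the argument around the two indexing ranges $m+1\le k\le n$ and $1\le k\le m$, plus a uniform identification of the degree shift. I first verify that $(F^k)_{k=0}^n$ is a filtration: the endpoints $F^0=\im(\varphi^2)$ and $F^n=\ker(\varphi^1)$ are immediate from the definitions and Lemma~\ref{lem:1.1}; the inclusion $F^{k-1}\subseteq F^k$ is trivial for $m+1\le k\le n$ and, for $1\le k\le m$, follows from the key identity
\[
\gbf{\alpha}_j=\pm\gcd(f^j_{j+1},f^{j+1}_j)\,\gbf{\beta}_j.
\]
This identity is a direct consequence of the commutativity relation~\eqref{eqn:relations1}: setting $g_j:=f^j_{j+1}/f^j=f^{j+1}_j/f^{j+1}$, one sees that both $\gbf{\alpha}_j$ and $\gcd(f^j_{j+1},f^{j+1}_j)\gbf{\beta}_j$ equal $\pm g_j(f^j\mathbf{e}_{j+1}-f^{j+1}\mathbf{e}_j)$, with the sign $-$ arising only at $j=m$ because of the wrap-around in the cyclic labelling.

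Parts (2) and (3) of the proposition then come essentially for free. For $m+1\le k\le n$, the generating sets of $F^k$ and $F^{k-1}$ involve only $\gbf{\beta}$'s and match exactly the filtration studied in Section~\ref{sec:syzygies}, so the monomial ideal $I_k$ is precisely the one computed in Proposition~\ref{prop:filtration1}\three--\four. The degree shift, which is identical in all three cases, follows by computing the $\Cl(X)$-degree of $\gbf{\beta}_k$: using $L_\mu\cong L(-D^\mu)$ (immediate from diagram~\eqref{eqn:diagrammaintext}) together with the identity $\lcm(D^{\mu_k},D^{\nu_k})+\gcd(D^{\mu_k},D^{\nu_k})=D^{\mu_k}+D^{\nu_k}$, one rewrites this degree as the claimed twist $L_{\mu_k}\otimes L_{\nu_k}\otimes L^{-1}(\gcd(D^{\mu_k},D^{\nu_k}))$.

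The substantive new case is part~(1), where $1\le k\le m$ and the generating set is mixed. Given $s\gbf{\beta}_k\in F^{k-1}$, write $s\gbf{\beta}_k=\sum_{i<k}s_i\gbf{\beta}_i+\sum_{j\ge k}t_j\gbf{\alpha}_j$ and substitute the proportionality $\gbf{\alpha}_j=\pm\gcd(f^j_{j+1},f^{j+1}_j)\gbf{\beta}_j$; the resulting equation is a syzygy on $(\gbf{\beta}_1,\ldots,\gbf{\beta}_m)$, which by Lemma~\ref{lem:cyclic} equals $r\gbf{\sigma}_0$ for some $r\in S$. Matching coefficients of $\gbf{\varepsilon}_j$ in~\eqref{eqn:sigma0} expresses $s$ as the sum of a free term $\pm t_k\gcd(f^k_{k+1},f^{k+1}_k)$ and the coefficient $r\cdot f^{1,\ldots,m}/f^{k,k+1}$ from $\gbf{\sigma}_0$, with $r\in S$ subject to the divisibility constraints $\gcd(f^j_{j+1},f^{j+1}_j)\mid r\cdot f^{1,\ldots,m}/f^{j,j+1}$ for each $j>k$, required so that $t_j\in S$. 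The first generator of $I_k$ drops out by taking $r=0$ and $t_k$ free; the second is the minimal valid $r\cdot f^{1,\ldots,m}/f^{k,k+1}$. The main obstacle, and the technical heart of the argument, is to consolidate the simultaneous colon-ideal conditions on $r$ into the single compact expression $\lcm(f^{1,\ldots,m},\gcd(f^{k+1}_{k+2},f^{k+2}_{k+1}),\ldots,\gcd(f^m_1,f^1_m))/f^{k,k+1}$ claimed in the proposition, paying particular attention to the boundary index $j=m$ where the transposition $(1,m)$ interrupts the otherwise cyclic bookkeeping.
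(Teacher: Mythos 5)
Your overall strategy is the same as the paper's: the identity $\gbf{\alpha}_j=\gcd(f^j_{j+1},f^{j+1}_j)\,\gbf{\beta}_j$ (up to sign at $j=m$), deduced from \eqref{eqn:relations1}, gives the filtration; cases (2) and (3) are quoted from Proposition~\ref{prop:filtration1}\three--\four; and the twist is identified by computing the degree of $\gbf{\beta}_k$, which is equivalent to the paper's construction of the graded map $S(L_{\mu_k}\otimes L_{\nu_k}\otimes L^{-1}(\gcd(D^{\mu_k},D^{\nu_k})))\to\bigoplus_j S(L_j)$ hitting $\gbf{\beta}_k$. Up to that point the proposal is fine and essentially identical to the paper.

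The genuine gap is in case (1), and it sits exactly where you park it. You correctly reduce to the colon computation: $s\gbf{\beta}_k\in F^{k-1}$ forces $s=\pm t_k\gcd(f^k_{k+1},f^{k+1}_k)+r\,f^{1,\dots,m}/f^{k,k+1}$ with $r$ subject to $\gcd(f^j_{j+1},f^{j+1}_j)\mid r\,f^{1,\dots,m}/f^{j,j+1}$ for $j>k$; but you then declare the consolidation of these conditions into the displayed generator of $I_k$ to be ``the main obstacle'' and stop. That consolidation \emph{is} the content of case (1): in the paper it is carried out (tersely) by computing that the syzygy module of the mixed system $\gbf{\beta}_1,\dots,\gbf{\beta}_k,\gbf{\alpha}_{k+1},\dots,\gbf{\alpha}_m$ is cyclic with the explicit generator \eqref{eqn:sigma0alpha}, whose $\gbf{\varepsilon}_k$-coefficient is the second generator of $I_k$. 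Moreover the step is not mere bookkeeping. Your constraints say that $r$ is any multiple of
\[
\frac{\lcm\bigl(f^{1,\dots,m},\;\gcd(f^{k+1}_{k+2},f^{k+2}_{k+1})f^{k+1,k+2},\dots,\gcd(f^{m}_{1},f^{1}_{m})f^{m,1}\bigr)}{f^{1,\dots,m}},
\]
where by \eqref{eqn:relations1} one has $\gcd(f^{j}_{j+1},f^{j+1}_j)f^{j,j+1}=f^{j}_{j+1}f^{j+1}$; so the ``minimal valid $r\,f^{1,\dots,m}/f^{k,k+1}$'' you invoke is, on its face, $\lcm\bigl(f^{1,\dots,m},\{f^{j}_{j+1}f^{j+1}\}_{j>k}\bigr)/f^{k,k+1}$ rather than the expression $\lcm\bigl(f^{1,\dots,m},\{\gcd(f^{j}_{j+1},f^{j+1}_j)\}_{j>k}\bigr)/f^{k,k+1}$ appearing in the statement, and these monomials do not coincide for arbitrary data satisfying \eqref{eqn:relations1}--\eqref{eqn:relations2} (for instance $m=3$, $S=\kk[x,y]$, $f^1=y$, $f^2=xy$, $f^3=x$, $f^1_2=y$, $f^2_1=xy$, $f^2_3=xy$, $f^3_2=x$, $f^3_1=x$, $f^1_3=y$ already separates them at $k=1$). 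So an argument reconciling your output with the displayed $I_k$ (or, failing that, a re-examination of the displayed formula against the intermediate claim \eqref{eqn:sigma0alpha}) is precisely the missing piece; without it case (1), and hence the proposition, is not established by your proposal.
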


\begin{proof}
To prove that the $S$-modules $F^k$ define a filtration, we need only show that $\gbf{\alpha}_{k} \in F^k$ for all $1\leq k \leq m$. For this, relation \eqref{eqn:relations1} gives
\begin{equation}
\label{eqn:relationsgcd}
D^k - \gcd(D^k,D^{k+1}) = D^k_{k+1} - \gcd(D^k_{k+1},D^{k+1}_k),
\end{equation}
and hence
\[
\frac{f^{k,k+1}}{f^{k+1}} = \frac{\lcm(f^k,f^{k+1})}{f^{k+1}}=\frac{f^k}{\gcd(f^k,f^{k+1})}=\frac{f^k_{k+1}}{\gcd(f^k_{k+1},f^{k+1}_k)}.
\]
Similarly, we have $f^{k,k+1}/f^{k}= f^{k+1}_{k}/\gcd(f^k_{k+1},f^{k+1}_k)$. Therefore
\begin{equation}
\label{eqn:alpha-beta}
\gbf{\alpha}_{k}=\gcd(f^k_{k+1},f^{k+1}_k)\left(\frac{f^{k, k+1}}{f^{k+1}}\mathbf{e}_{k+1}-\frac{f^{k,k+1}}{f^{k}}\mathbf{e}_{k}\right) = \gcd(f^k_{k+1},f^{k+1}_k)\gbf{\beta}_{k}
\end{equation}
for $1 \leq k \leq m$ as required. To prove part (1), we first note that
\[
\frac{F^k}{F^{k-1}}\cong\frac{\langle\gbf{\beta}_{k}\rangle/ ( \langle \gbf{\beta}_1, \dots,\gbf{\beta}_{k-1},\gbf{\alpha}_{k+1}, \dots,\gbf{\alpha}_m\rangle \cap \langle\gbf{\beta}_{k}\rangle)}{\langle\gbf{\alpha}_{k}\rangle/ ( \langle \gbf{\beta}_1, \dots,\gbf{\beta}_{k-1},\gbf{\alpha}_{k+1}, \dots,\gbf{\alpha}_m\rangle \cap \langle\gbf{\alpha}_{k}\rangle )}.
\]
In order to compute this quotient, it suffices, in view of \eqref{eqn:alpha-beta} and the remarks at the beginning of the proof of Proposition~\ref{prop:filtration1}, to determine a set of generators for the module of syzygies on $ \gbf{\beta}_1, \dots,\gbf{\beta}_{k},\gbf{\alpha}_{k+1}, \dots,\gbf{\alpha}_m$ for $1\leq k \leq m$. Proceeding exactly as in the proof of Lemma~\ref{lem:cyclic}, we find that this module is cyclic with generator
\begin{equation}
\label{eqn:sigma0alpha}
\gbf{\sigma}_0:=-\frac{\lcm(f^{1,\dots,m},g^{k+1,k+2},\dots,g^{m,1} )}{f^{1,m}}\gbf{\varepsilon}_m+\sum_{j=1}^{m-1} \frac{\lcm(f^{1,\dots,m},g^{k+1,k+2},\dots,g^{m,1} )}{f^{j,j+1}}\gbf{\varepsilon}_j,
\end{equation}
where we have set $g^{i,i+1}:=\gcd(f^i_{i+1},f^{i+1}_i)$ for $k+1\leq i \leq m$. Ignoring for now the $\Pic(X)$-grading, we deduce from this that
\[
\frac{\langle\gbf{\beta}_{k}\rangle}{ \langle \gbf{\beta}_1, \dots,\gbf{\beta}_{k-1},\gbf{\alpha}_{k+1}, \dots,\gbf{\alpha}_m\rangle \cap \langle\gbf{\beta}_{k}\rangle}\cong \frac{S}{\langle \lcm(f^{1,\dots,m},g^{k+1,k+2},\dots,g^{m,1})/f^{k,k+1}\rangle}.
\]
and therefore, by virtue of \eqref{eqn:alpha-beta},
\[
\frac{F^{k}}{F^{k-1}}\cong \frac{S}{\langle \gcd(f^{k}_{k+1},f^{k+1}_k), \lcm(f^{1,\dots,m},g^{k+1,k+2},\dots,g^{m,1})/f^{k,k+1}\rangle}
\]
which gives the ideal $I_k$ in part (1). For parts (2) and (3), Proposition~\ref{prop:filtration1}(iii) and (iv) respectively determine the ideals $I_k$ for which $F^k/F^{k-1}$ is isomorphic to $S/I_k$ as ungraded rings.

It remains to establish the isomorphism as $\Cl(X)$-graded rings. In light of the above and isomorphism \eqref{eqn:betakquotient}, it suffices to show that the degree of $\gbf{\beta}_k$ is $L_{\mu_k}\otimes L_{\nu_k}\otimes L^{-1}(\gcd(D^{\mu_k},D^{\nu_k}))$ for $1\leq k \leq n$. For each $1 \leq k \leq n$, multiplication by the monomials $f^{\mu_k}$ and $f^{\nu_k}$ define $\Pic(X)$-graded maps $S \to S(L\otimes L_{\mu_k}^{-1})$ and $S \to S(L\otimes L_{\nu_k}^{-1})$ respectively. Tensoring each map with $S(L_{\mu_k}\otimes L_{\nu_k}\otimes L^{-1})$ yields $\Pic(X)$-graded maps $S(L_{\mu_k}\otimes L_{\nu_k}\otimes L^{-1}) \to S(L_{\nu_k})$ and $S(L_{\mu_k}\otimes L_{\nu_k}\otimes L^{-1}) \to S(L_{\mu_k})$ which, in turn, can be combined to form a $\Pic(X)$-graded map
\[
S(L_{\mu_k}\otimes L_{\nu_k}\otimes L^{-1}) \longrightarrow \bigoplus_{j=1}^m S(L_j),
\]
whose image in $\bigoplus_{j=1}^m S(L_j)$ is generated by the element $f^{\mu_k} \mathbf{e}_{\nu_k}-f^{\nu_k}\mathbf{e}_{\mu_k}$. Twisting further by $S(\mathscr{O}_X(\gcd(D^{\mu_k},D^{\nu_k})))$ determines a $\Pic(X)$-graded map
\[
S(L_{\mu_k}\otimes L_{\nu_k}\otimes L^{-1}(\gcd(D^{\mu_k},D^{\nu_k}))) \longrightarrow \bigoplus_{j=1}^m S(L_j)
\]
whose image is generated by the element
\begin{equation}
\label{eqn:nearlybeta}
\frac{f^{\mu_k}}{\gcd(f^{\mu_k},f^{\nu_k})} \mathbf{e}_{\nu_k}-\frac{f^{\nu_k}}{\gcd(f^{\mu_k},f^{\nu_k})}\mathbf{e}_{\mu_k}.
\end{equation}
To prove the claim it remains to show that \eqref{eqn:nearlybeta} coincides with $\gbf{\beta}_k$, but this is immediate since $f^{\mu_k}/\gcd(f^{\mu_k},f^{\nu_k})= \lcm(f^{\mu_k},f^{\nu_k})/f^{\nu_k}$ and $f^{\nu_k}/\gcd(f^{\mu_k},f^{\nu_k}) = \lcm(f^{\mu_k},f^{\nu_k})/f^{\mu_k}$.
\end{proof}

For $1 \leq k \leq n$, each of the generators of $I_k$ listed in Proposition~\ref{prop:filtration2} is a monomial in the Cox ring $S$ of $X$, so its divisor of zeros is an effective torus-invariant Weil divisor in $X$. Notice that while $f^j$, $f^j_{j+1}$, $f^{j+1}_j$, $f_{j,j+1}$ define torus-invariant Cartier divisors $D^j$, $D^j_{j+1}$, $D^{j+1}_{j}$, $D_{j,j+1}$ in $X$, the generators of the ideals $I_k$ are Weil divisors in general.

\begin{definition}
\label{def:Zk}
For each $1 \leq k \leq n$, define a subscheme $Z_k \subset X$ to be the scheme-theoretic intersection of a set of effective Weil divisors depending on $k$ as follows:
\begin{enumerate}
\item[\one] for $1\leq k\leq m$, define $Z_k$ to be the scheme-theoretic intersection of $\gcd(D_{k+1}^k,D^{k+1}_k)$ and the divisor $\lcm\big(D^1,\dots,D^m,\gcd(D_{k+2}^{k+1},D^{k+2}_{k+1}),\dots,\gcd(D_{1}^{m},D^{1}_{m})\big)-\lcm(D^k,D^{k+1})$;
\item[\two] for $m+1 \leq k \leq 2m-3$, define $Z_k$ to be the scheme-theoretic intersection of the divisors $\lcm(D^1,D^{\nu_k},D^{\nu_k+1},\dots,D^m)-\lcm(D^{1},D^{\nu_k})$ and $\lcm(D^1,D^{\nu_k-1},D^{\nu_k})-\lcm(D^{1},D^{\nu_k})$;
\item[\three] for $2m-2\leq k \leq n$, define $Z_k$ to be the scheme-theoretic intersection of the divisors $\lcm(D^{\mu},D^{\mu_k},D^{\nu_k})-\lcm(D^{\mu_k},D^{\nu_k})$ for $\mu \in \{1, \dots, \mu_k-1\}\cup\{\nu_k-1\}$.
\end{enumerate}
\end{definition}

 The subschemes $Z_k\subset X$ are torus-invariant, though some (possibly all) may be empty, see Example~\ref{exa:hex} for an explicit calculation. These subschemes enable us to formulate and prove the main result of this paper (this is Theorem~\ref{thm:mainintro} from the introduction).

\begin{theorem}
\label{thm:main}
Let $X$ be a normal toric variety and let $T^\bullet$ be the complex from \eqref{eqn:Tbullet}, with differentials determined by the Cartier divisors shown in \eqref{eqn:diagram}. Then:
\begin{enumerate}
\item[(1)]$H^0(T^{\bullet}) \cong \mathscr{O}_Z \otimes L$ where $Z$ is the scheme-theoretic intersection of $D^1,\dots,D^m;$
\item[(2)]$H^{-1}(T^{\bullet})$ has an $n$-step filtration 
\[
\im(d^2)=F^0 \subseteq F^1\subseteq \cdots \subseteq F^{n-1}\subseteq F^n=\ker(d^1)
\]
 where, for $1\leq k \leq n$ and for the permutation $\tau_k=(\mu_k,\nu_k)$, we have
\begin{equation}
\label{eqn:sheafquotient1}
F^k/F^{k-1}\cong \mathscr{O}_{Z_k} \otimes L_{\mu_k}\otimes
L_{\nu_k}\otimes L^{-1}(\gcd(D^{\mu_k},D^{\nu_k}));
\end{equation}
\item[(3)]$H^{-2}(T^{\bullet}) \cong \mathscr{O}_D \otimes L(D)$ where $D=\gcd(D_{1,2},D_{2,3},\dots,D_{m,1});$
\item[(4)]$H^{-3}(T^{\bullet})\cong 0$.
\end{enumerate}
\end{theorem}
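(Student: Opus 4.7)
The plan is to transfer the problem to commutative algebra via Cox's exact functor from \eqref{eq:Cox}: apply it to the complex \eqref{eqn:complexSmods} of $\Cl(X)$-graded $S$-modules that lifts $T^\bullet$, compute the cohomology modules, and push the results back down to sheaves on $X$. Parts (1) and (4) are immediate in this picture. For (1), the image of $\varphi^1$ in $S(L)$ is the cyclic submodule $\langle f^1,\dots,f^m\rangle\,\mathbf{e}_L$, so the $0$-th cohomology module is $(S/\langle f^1,\dots,f^m\rangle)(L)$; since each $f^j$ has divisor of zeros $D^j$, the Cox functor sends this to $\mathscr{O}_Z\otimes L$ where $Z$ is the scheme-theoretic intersection of $D^1,\dots,D^m$. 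For (4), each $f_{j,j+1}$ is a nonzero monomial in the integral domain $S$, so $\varphi^3$ is injective and hence $H^{-3}(T^\bullet)=0$.

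Part (2) follows by applying the exact Cox functor to the filtration of $\ker(\varphi^1)/\im(\varphi^2)$ provided by Proposition~\ref{prop:filtration2}, giving the desired filtration $F^0\subseteq\cdots\subseteq F^n$ of $H^{-1}(T^\bullet)$ whose successive quotients inherit the stated degree shifts. The only residual task is to identify $\widetilde{S/I_k}$ with $\mathscr{O}_{Z_k}$, which amounts to a mechanical case-by-case comparison between the divisors of zeros of the monomial generators of $I_k$ listed in Proposition~\ref{prop:filtration2} and the effective Weil divisors listed in Definition~\ref{def:Zk}; in each of the three ranges of $k$, each monomial generator of $I_k$ accounts for exactly one of the divisors cutting out $Z_k$.

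The main obstacle is part (3), which requires showing $\ker(\varphi^2)/\im(\varphi^3)\cong(S/\langle d\rangle)(L(D))$ as $\Cl(X)$-graded $S$-modules, where $d$ is the monomial with divisor $D=\gcd(D_{1,2},\dots,D_{m,1})$. The key step is to prove that $\ker(\varphi^2)$ is cyclic, generated by $\gbf{\tau}_0:=\sum_{j=1}^{m}(f_{j,j+1}/d)\,\mathbf{e}_{j,j+1}$ in degree $L(D)$. To this end, one observes that $\gbf{\sigma}=\sum_j g_j\mathbf{e}_{j,j+1}$ lies in $\ker(\varphi^2)$ if and only if $g_{j-1}f^{j-1}_j=g_jf^{j+1}_j$ for every $j$; combining this with relation \eqref{eqn:relations2}, rewritten as $f^{j-1}_jf_{j-1,j}=f^{j+1}_jf_{j,j+1}$, yields $g_jf_{j-1,j}=g_{j-1}f_{j,j+1}$ in the UFD $S$. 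Iterating cyclically around the wheel and using that $\gcd(f_{1,2},\dots,f_{m,1})=d$, a standard gcd argument produces an $h\in S$ with $g_j=h(f_{j,j+1}/d)$ for all $j$, so $\gbf{\sigma}=h\gbf{\tau}_0$. Since $\varphi^3(\mathbf{e}_L)=\sum_{j}f_{j,j+1}\mathbf{e}_{j,j+1}=d\gbf{\tau}_0$, the quotient $\ker(\varphi^2)/\im(\varphi^3)$ is precisely $(S/\langle d\rangle)(L(D))$, which the exact Cox functor sends to $\mathscr{O}_D\otimes L(D)$.
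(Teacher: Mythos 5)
Your proposal is correct and follows essentially the same route as the paper: lift $T^\bullet$ to the complex \eqref{eqn:complexSmods} via the exact Cox functor, deduce (2) from Proposition~\ref{prop:filtration2} together with Definition~\ref{def:Zk}, and handle (1) and (4) by the immediate cokernel/injectivity observations. The only cosmetic difference is in (3), where you work entirely at the module level by showing $\ker(\varphi^2)$ is cyclic on $\gbf{\tau}_0$ (after dividing out $d$), whereas the paper first treats the case $\gcd(D_{1,2},\dots,D_{m,1})=0$ with the same UFD divisibility argument based on \eqref{eqn:relations2} and then factors $d^3$ through $L(D)$ at the sheaf level; the two arguments are interchangeable.
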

\begin{proof}
As described at the beginning of this section, the complex $T^\bullet$ arises from a diagram \eqref{eqn:diagrammaintext} of invertible sheaves on $X$ in which the relations \eqref{eqn:relations1} and \eqref{eqn:relations2} hold, and every such diagram determines a complex of $\Cl(X)$-graded $S$-modules of the form \eqref{eqn:complexSmods}, where one can reproduce the original complex $T^\bullet$ by applying the exact functor \eqref{eq:Cox}. In particular, one can calculate the cohomology sheaves of $T^\bullet$ by computing the cohomology modules of \eqref{eqn:complexSmods} and applying the Cox functor. The statement of part (2) then follows from Proposition~\ref{prop:filtration2} and Definition~\ref{def:Zk}. 

For part (1), note that $H^0(T^\bullet)$ is the cokernel of $\bigoplus_i \mathscr{O}
_X(-D^i)\otimes L\hookrightarrow \mathscr{O}
_X\otimes L$, namely the sheaf $\mathscr{O}
_Z\otimes L$ where $Z$ is the scheme-theoretic intersection of $D^1,\dots,D^m$. For part (4), every nonzero map between invertible sheaves is injective, so $H^{-3}(T^\bullet)\cong 0$. It remains to prove part (3). The proof of the analogous statement from \cite[Lemma~3.1]{CL09} does not immediately extend to our setting, as was the case with parts (1) and (4) above, but we can nevertheless adapt the argument as follows. We claim first that if the greatest common divisor $D$ is zero then $H^{-2}(T^\bullet)\cong 0$. We need only show that  complex \eqref{eqn:complexSmods} has no cohomology in degree $-2$. Indeed, suppose $\gbf{\eta}= \sum_{j=1}^m u_j \mathbf{e}_{j,j+1}$ lies in the kernel of $\varphi^2$, so
\[
0=\varphi^2(\gbf{\eta})=\sum_{j=1}^m u_j( f^j_{j+1}\mathbf{e}_{j+1}- f^{j+1}_j \mathbf{e}_{j}).
\]
This translates into the following set of equations: 
\[
u_{j-1} f^{j-1}_{j}=u_{j}f^{j+1}_{j}\qquad 1 \leq j \leq m.
\] 
By relation \eqref{eqn:relations2} we have $f^{j-1}_j f_{j-1,j}=f^{j+1}_j f_{j,j+1}$ for $1 \leq j \leq m$. Consequently, we find that
\begin{equation}
\label{eqn:relationH2}
u_{j-1}f_{j,j+1}=u_{j}f_{j-1,j}, \qquad 1 \leq j \leq m.
\end{equation}
We claim that $f_{j,j+1}$ divides $u_j$ for all $1 \leq j \leq m$. It suffices to prove that $f_{1,2}$ divides $u_1$ by virtue of \eqref{eqn:relationH2}. Let $x_i$ be a prime factor of $f_{1,2}$ with multiplicity $p$. Since by assumption $\gcd(f_{1,2},f_{2,3},\dots,f_{m,1})=1$, it follows that $x_i^p$ does not divide $f_{\nu,\nu+1}$ for some $\nu \neq 1$. Appealing to \eqref{eqn:relationH2} once again, we find that $u_1f_{\nu,\nu+1}=u_{\nu}f_{1,2}$, and thus $x_{i}^p$ divides $u_1 f_{\nu,\nu+1}$. Since $S$ is a unique factorisation domain, this means that $x_i^p$ divides $u_1$, which in turn implies that $f_{1,2}$ divides $u_1$. If we now set $u:=u_1/f_{1,2}$, then equations \eqref{eqn:relationH2} give
\[
u=\frac{u_1}{f_{1,2}}=\frac{u_2}{f_{2,3}}=\cdots=\frac{u_m}{f_{m,1}},
\]
from which it follows that $\gbf{\eta}=u \sum_{j=1}^m f_{j,j+1}\mathbf{e}_{j,j+1}$. Thus, $\gbf{\eta}$ lies in the image of $\varphi^3$, so the complex \eqref{eqn:complexSmods} has no cohomology in degree $-2$ as required. 

To complete the proof of part (3), suppose $D\neq 0$. We can factor $d^3\colon T^{-3}\to T^{-2}$ as a map $L \to L(D)$ followed by a map with no common divisors. By the above argument, the image of $L(D)$ under this map equals the kernel of $d^2\colon T^{-2} \to T^{-1}$. Therefore $H^{-2}(T^{\bullet})$ can be identified with the cokernel of $L \to L(D)$, which is $\mathscr{O}
_{D} \otimes L(D)$.
\end{proof}

\begin{remark}
\label{rem:CautisLogvinenko}
For $m=3$, Theorem~\ref{thm:mainintro} agrees with the statement of the main technical result from Cautis--Logvinenko~\cite[Lemma~3.1]{CL09} (recall from the discussion surrounding Example~\ref{ex:counterexample} above that the assumptions from \emph{loc.~cit.}, namely that $X$ is an arbitrary smooth separated scheme, should be replaced by the assumptions of Theorem~\ref{thm:mainintro}). Parts (1), (3), (4) of Theorem~\ref{thm:mainintro} clearly generalise the analogues from \cite[Lemma~3.1]{CL09}. As for $H^{-1}(T^\bullet)$, we have $m=3$ and hence $n=3$, so Theorem~\ref{thm:mainintro}(2) gives a $3$-step filtration 
\[
\im(d^2)=F^0\subseteq F^1\subseteq F^2\subseteq F^3=\ker(d^1),
\]
and we claim that the successive quotients agree with those of  {\it loc.~cit.}. To justify this we first compute $F^2/F^{1}$. Since $\tau_2 = (2,3)$, Theorem~\ref{thm:mainintro}(2) shows that 
\[
F^2/F^1\cong \mathscr{O}
_{Z_2}\otimes L_{2}\otimes L_{3}\otimes L^{-1}\big(\gcd(D^{2},D^{3})\big),
\] 
where $Z_2$ is the intersection of $\gcd(D^2_{3},D^{3}_2)$ and $\lcm(D^1, D^2, D^3, \gcd(D^3_1, D^1_3))-\lcm(D^2,D^3)$. A direct computation shows that the relation defined by the generator $\gbf{\sigma}_0$ from \eqref{eqn:sigma0alpha} is  
\[
\frac{f^3_1}{\gcd(f^3_1,\widetilde{f}^2_1)}\gbf{\beta}_1 + \frac{\widetilde{f}^1_2f^3_1}{\gcd(f^3_1,\widetilde{f}^2_1)\widetilde{f}^3_2}\gbf{\beta}_2 - \frac{\widetilde{f}^2_1}{\gcd(f^3_1,\widetilde{f}^2_1)}\gbf{\alpha}_3= 0,
\]
where $\widetilde{f}^i_j = f^i_j/\gcd(f^i_j, f^j_i)$. Since $k=2$, the coefficient of $\gbf{\beta}_2$ coincides with the generator $\lcm(f^{1,2,3},\gcd(f^3_1,f^1_3))/f^{2,3}$ of the ideal $I_2$. In particular, the scheme $Z_2$ is the intersection of $\gcd(D^2_{3},D^{3}_2)$ and $\widetilde{D}^1_{2} +  D^3_{1}-\widetilde{D}^3_2 - \gcd(D^3_1, \widetilde{D}^2_1)$, where $\widetilde{D}^i_j$ is the divisor of zeros of the function $\widetilde{f}^i_j$. Permutations are listed as $\tau_1=(1,2), \tau_2=(3,1), \tau_3=(2,3)$ in \cite{CL09}, so after applying permutation $(1,2,3)$ to our indices, we need only invoke the identity 
\[
\widetilde{D}^2_{3} +  D^1_{2}-\widetilde{D}^1_3 - \gcd(D^1_2, \widetilde{D}^3_2) = D^2+\lcm(D^1_2, \widetilde{D}^3_2)-D^3-\widetilde{D}^1_3
\] 
from \cite[p206]{CL09} to see that $Z_2$ is the scheme in the second bullet point of \cite[Lemma~3.1(2)]{CL09}. In order to compare the sheaves, equation \eqref{eqn:relationsgcd} gives $\gcd(D^2, D^3) = D^2 + \gcd(D^2_3, D^3_2) - D^2_3$, and $\mathscr{O}_X
(D^2) = L_2^{-1}\otimes L$ and $\mathscr{O}_X
(-D^2_3) \cong L_3^{-1}\otimes L_{2,3}$ hence
\begin{align*}
L_{2}\otimes L_{3}\otimes L^{-1}\big(\gcd(D^{2},D^{3})\big) & \cong L_{2}\otimes L_{3}\otimes L^{-1}\big(\gcd(D^{2}_3,D^{3}_2)\big) \otimes L_2^{-1}\otimes L\otimes L_3^{-1}\otimes L_{2,3}\\
 & \cong L_{2,3}(\gcd(D^3_{2},D^{2}_{3})\big).
 \end{align*}
Again, applying the permutation $(1,2,3)$ to the indices recovers the sheaf from the second bullet point of \cite[Lemma~3.1(2)]{CL09}, so our description of $F^2/F^1$ agrees with that from \emph{loc.cit.}. A very  similar calculation shows that our unified description of the quotients $F^k/F^{k-1}$ for $k=1,3$ agrees with those of $F^3/F^2$ and $F^1/F^0$ from \cite[Lemma~3.1(2)]{CL09}.
\end{remark}

\begin{example}
\label{exa:hex}
Let $X$ be the smooth toric threefold determined by the fan $\Sigma$ in $\RR^3$ whose one-dimensional cones are generated by the vectors
\[
\text{$v_1=(1,0,1)$, $v_2=(0,1,1)$, $v_3=(-1,1,1)$, $v_4=(-1,0,1)$, $v_6=(1,-1,1)$, $v_7=(0,0,1)$},
\]
where the cones in higher dimension are best illustrated by the height one slice of $\Sigma$ as shown in Figure~\ref{fig:toricfanX}. In particular, the Cox ring of $X$ is $S=\kk[x_1,\dots, x_7]$ and the Cox irrelevant ideal is the monomial ideal $B = (x_3x_4x_5x_6, x_2x_3x_4x_7, x_2x_3x_4x_6, x_1x_5x_6x_7, x_1x_3x_5x_6, x_1x_2x_3x_6)$. 
\begin{figure}[!ht]
 \centering
 \label{fig:toricfanX}
  \psset{unit=1cm}
     \begin{pspicture}(0,-0.2)(2.5,2.4)
         \psline{*-*}(2,1)(1,2)
         \psline{-*}(1,2)(0,2)
  \psline{-*}(0,2)(0,1)
   \psline{-*}(0,1)(1,0)
  \psline{-*}(1,0)(2,0)
  \psline{-}(2,0)(2,1)
   \psline{-}(2,1)(1,1)
   \psline{-}(1,2)(1,1)
   \psline{-}(0,1)(1,1)
   \psline{-}(1,0)(1,1)
\psline{-}(2,1)(1,0)
\psline{-}(1,2)(0,1)
       \psdot(0,0)
       \psdot(1,0)
       \psdot(2,0)
       \psdot(0,1)
       \psdot(1,1)
       \psdot(2,1)
        \psdot(0,2)
       \psdot(1,2)
       \psdot(2,2)
          \rput(2.2,1.2){$v_1$}
  \rput(1.2,2.2){$v_2$}
   \rput(-0.2,2.2){$v_3$}
 \rput(-0.2,0.8){$v_4$}
  \rput(0.8,-0.2){$v_5$}
\rput(2.2,-0.2){$v_6$}
\rput(1.25,1.2){$v_7$}
\end{pspicture}        
  \caption{Height one slice of the fan $\Sigma$ defining the smooth toric threefold $X$}
  \end{figure}
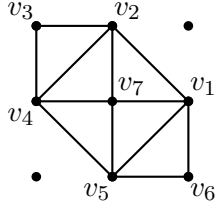
 For $1 \leq \rho \leq 7$, let $E_{\rho}$ denote the divisor in $X$ corresponding to the ray of $\Sigma$ generated by $v_{\rho}$; we use the shorthand $E_{16}=E_{1}+E_6$, $E_{126} = E_1 + E_2 + E_6$ and so on. The group $\Cl(X)$ is the abelian group generated by $E_1,\dots, E_7$ subject to the relations $E_{16} \sim E_{34}$, $E_{23}\sim E_{56}$, and $E_{1234567} \sim 0$ (and since $X$ is smooth, we have that $\Cl(X)$ is isomorphic to the Picard group of $X$).

Set $L:=\mathscr{O}_X$, and consider the diagram of invertible sheaves
 \begin{equation}
 \label{eqn:wheelexample}
  \begin{split}
     \centering    
         \psset{unit=0.45cm}
     \begin{pspicture}(0,-1)(25,16)
\cnodeput*(0,7.5){A}{$L$} 
\cnodeput*(8,15){B}{$L_{1,2}$}
\cnodeput*(8,12){C}{$L_{2,3}$} 
\cnodeput*(8,9){D}{$L_{3,4}$}
\cnodeput*(8,6){S}{$L_{4,5}$}
\cnodeput*(8,3){E}{$L_{5,6}$}
\cnodeput*(8,0){U}{$L_{6,1}$}
\cnodeput*(18,15){F}{$L_{1}$}
\cnodeput*(18,12){G}{$L_{2}$}
\cnodeput*(18,9){H}{$L_{3}$}
\cnodeput*(18,6){T}{$L_{4}$}
\cnodeput*(18,3){I}{$L_{5}$}
\cnodeput*(18,0){V}{$L_{6}$}
\cnodeput*(26,7.5){J}{$L$.}
   \ncline{->}{A}{B}\lput*{:U}(0.5){$\scriptstyle{E_{345}}$}
   \ncline{->}{A}{C}\lput*{:U}(0.5){$\scriptstyle{E_{456}}$}
   \ncline{->}{A}{D}\lput*{:U}(0.5){$\scriptstyle{E_{156}}$}
    \ncline{->}{A}{S}\lput*{:U}(0.5){$\scriptstyle{E_{126}}$}
   \ncline{->}{A}{E}\lput*{:U}(0.5){$\scriptstyle{E_{123}}$}
    \ncline{->}{A}{U}\lput*{:U}(0.5){$\scriptstyle{E_{234}}$}
 \ncline{->}{B}{F}\lput*{:U}(0.3){$\scriptstyle{E_{27}}$}
  \ncline{->}{B}{G}\lput*{:U}(0.3){$\scriptstyle{E_6}$}
  \ncline{->}{C}{G}\lput*{:U}(0.3){$\scriptstyle{E_3}$}
  \ncline{->}{C}{H}\lput*{:U}(0.3){$\scriptstyle{E_{17}}$}
\ncline{->}{D}{H}\lput*{:U}(0.3){$\scriptstyle{E_{47}}$}
\ncline{->}{D}{T}\lput*{:U}(0.3){$\scriptstyle{E_{27}}$}
\ncline{->}{S}{T}\lput*{:U}(0.6){$\scriptstyle{E_{57}}$}
  \ncline{->}{S}{I}\lput*{:U}(0.6){$\scriptstyle{E_3}$}
  \ncline{->}{E}{I}\lput*{:U}(0.6){$\scriptstyle{E_6}$}
  \ncline{->}{E}{V}\lput*{:U}(0.6){$\scriptstyle{E_{47}}$}
\ncline{->}{U}{V}\lput*{:U}(0.6){$\scriptstyle{E_{17}}$}
\nccurve[angleA=-40,angleB=140]{->}{U}{F}\lput*{:U}(0.744){$\scriptscriptstyle{E_{57}}$}
 \ncline{->}{F}{J}\lput*{:U}(0.5){$\scriptstyle{E_{16}}$}
   \ncline{->}{G}{J}\lput*{:U}(0.5){$\scriptstyle{E_{127}}$}
   \ncline{->}{H}{J}\lput*{:U}(0.5){$\scriptstyle{E_{23}}$}
   \ncline{->}{T}{J}\lput*{:U}(0.5){$\scriptstyle{E_{34}}$}
   \ncline{->}{I}{J}\lput*{:U}(0.5){$\scriptstyle{E_{457}}$}
   \ncline{->}{V}{J}\lput*{:U}(0.5){$\scriptstyle{E_{56}}$}
       \end{pspicture}
  \end{split}
  \end{equation}
 where $L_4\cong L_1 = \mathscr{O}
_X(-E_{16})$, $L_5\cong L_2 = \mathscr{O}
_X(-E_{127})$, $L_6\cong L_3 = \mathscr{O}
_X(-E_{23})$, and similarly, where $L_{5,6}\cong L_{3,4}\cong L_{1,2}=\mathscr{O}
_X(E_{345})$, $L_{6,1}\cong L_{4.5}\cong L_{2,3}=\mathscr{O}
_X(E_{456})$. Let $T^{\bullet}$ be the total complex of diagram \eqref{eqn:wheelexample}. With the notation above, the generators $\gbf{\beta}_1,\dots,\gbf{\beta}_{15}$ of $\ker(d^1)$ are
  \begin{equation*}
\begin{array}{lll}
\gbf{\beta}_1= -x_2 x_7 \mathbf{e}_1 + x_6 \mathbf{e}_2, & \gbf{\beta}_6= x_5 \mathbf{e}_1 - x_1 \mathbf{e}_6, & \gbf{\beta}_{11}= -x_4 x_5 \mathbf{e}_2 + x_1 x_2 \mathbf{e}_5,\\
\gbf{\beta}_2= -x_3 \mathbf{e}_2 + x_1 x_7 \mathbf{e}_3, & \gbf{\beta}_7= -x_2 x_3 \mathbf{e}_1 + x_1 x_6 \mathbf{e}_3, & \gbf{\beta}_{12}= -x_5 x_6 \mathbf{e}_2 + x_1 x_2 x_7 \mathbf{e}_6, \\
\gbf{\beta}_3= -x_4 \mathbf{e}_3 + x_2 \mathbf{e}_4, & \gbf{\beta}_8= -x_3 x_4 \mathbf{e}_1 + x_1 x_6 \mathbf{e}_4, & \gbf{\beta}_{13}= -x_4 x_5 x_7 \mathbf{e}_3 + x_2 x_3 \mathbf{e}_5,  \\
\gbf{\beta}_4= -x_5 x_7 \mathbf{e}_4 + x_3 \mathbf{e}_5, & \gbf{\beta}_9= -x_4 x_5 x_7 \mathbf{e}_1 + x_1 x_6 \mathbf{e}_5, & \gbf{\beta}_{14}= -x_5 x_6 \mathbf{e}_3 + x_2 x_3\mathbf{e}_6,\\
\gbf{\beta}_5= -x_6 \mathbf{e}_5 + x_4 x_7 \mathbf{e}_6, & \gbf{\beta}_{10}= -x_3 x_4 \mathbf{e}_2 + x_1 x_2 x_7 \mathbf{e}_4, & \gbf{\beta}_{15}= -x_5 x_6 \mathbf{e}_4 + x_3 x_4 \mathbf{e}_6.\\
\end{array}
\end{equation*}

It is easy to see that the relations
\[
\gbf{\beta}_9 =-x_4 x_7 \gbf{\beta}_6 - x_1 \gbf{\beta}_5,\;\; \gbf{\beta}_{10}=x_4 \gbf{\beta}_2 + x_1 x_7 \gbf{\beta}_3,\;\;\gbf{\beta}_{12}=-x_5\gbf{\beta}_1 - x_2 x_7 \gbf{\beta}_6, \;\; \gbf{\beta}_{13}=x_5 x_7 \gbf{\beta}_3 + x_2 \gbf{\beta}_4
\]
hold, so the successive quotients $F^k/F^{k-1}$ vanish for $k=9,10,12,13$.
 In addition, the generators $\gbf{\alpha}_1,\dots,\gbf{\alpha}_6$ of $\im(d^2)$ satisfy $\gbf{\alpha}_1=\gbf{\beta}_1$, $\gbf{\alpha}_2=\gbf{\beta}_2$, $\gbf{\alpha}_3=x_7 \gbf{\beta}_3$, $\gbf{\alpha}_4=\gbf{\beta}_4$, $\gbf{\alpha}_5=\gbf{\beta}_5$ and $\gbf{\alpha}_6=x_7 \gbf{\beta}_6$, so $F^{k}/F^{k-1}$ also vanishes for $k=1, 2, 4, 5$. 
 
 We now analyse three nonvanishing quotients $F^k/F^{k-1}$ to illustrate part (2) of Theorem~\ref{thm:mainintro}. First consider $k=3$. The transposition $\tau_3=(3,4)$ determines $\gcd(D^3,D^4) = E_3$, so
\[
 F^3/F^{2} \cong \mathscr{O}
_{Z_3}\otimes L_{3}\otimes L_{4}\otimes L^{-1}(E_3)
\]
where, according to Definition~\ref{def:Zk}\one, $Z_3$ is the scheme-theoretic intersection of the effective torus-invariant divisors $\gcd(D^3_{4},D^{4}_3) = E_7$ and
\[
\lcm(D^1, D^2, D^3, D^4, D^5, D^6,\gcd(D^4_5,D^5_4),\gcd(D^5_6,D^6_5),\gcd(D^6_1,D^1_6))-\lcm(D^3,D^4) = E_{1567}.
\]
In particular, $\supp(\mathscr{O}
_{Z_3}) = E_7$.  Now consider the case $k=7$. The corresponding transposition  
$\tau_7=(1,3)$ determines $\gcd(D^1,D^3) = 0$, so
\[
 F^7/F^{6} \cong \mathscr{O}
_{Z_7}\otimes L_{1}\otimes L_{3}\otimes L^{-1}
\]
where, according to Definition~\ref{def:Zk}\two, $Z_7$ is the scheme-theoretic intersection of the divisors $\lcm(D^1,D^2,D^3)-\lcm(D^1,D^3) = E_7$ and $\lcm(D^1, D^3, D^4, D^5, D^6) - \lcm(D^1, D^3) = E_{457}$, giving $Z_7=E_7\cap E_{457}$ and $\supp(\mathscr{O}
_{Z_7}) = E_7$. Finally, consider the case $k=15$ for which the corresponding transposition $\tau_{15}=(4,6)$ determines $\gcd(D^4,D^6) = 0$, so
\[
 F^{15}/F^{14} \cong \mathscr{O}
_{Z_{15}}\otimes L_{4}\otimes L_{6}\otimes L^{-1}
\]
where, according to Definition~\ref{def:Zk}\three, $Z_{15}$ is the scheme-theoretic intersection of the divisors $\lcm(D^{\mu},D^4, D^6) - \lcm(D^4, D^6)$ for $\mu=1, 2, 3, 5$, giving $Z_{15} = E_1\cap E_{127}\cap E_2\cap E_7$. In particular, the support of $\mathscr{O}_{Z_{15}}$ is the torus-invariant point $E_1\cap E_2\cap E_7$ in $X$. 

As for $H^k(T^\bullet)$ for $k\neq -1$, notice that the scheme theoretic intersection of $D^1,\dots, D^6$ is contained in $D^1\cap D^4 = (E_1+E_6) \cap (E_3+E_4) = \emptyset$, so $H^{0}(T^{\bullet}) \cong 0$ by Theorem~\ref{thm:mainintro}(1). Similarly, $\gcd(D_{1,2},D_{2,3},D_{3,4},D_{4,5},D_{5,6},D_{6,1})=0$ so $H^{-2}(T^{\bullet})\cong 0$ by Theorem~\ref{thm:mainintro}(3). It follows that the complex $T^{\bullet}$ has cohomology concentrated in degree $-1$.
\end{example}

\begin{remark}
\label{rem:hex}
One can carry out much of the above calculation using Macaulay2~\cite{M2} in any given example, though the final description of $F^k/F^{k-1}$ is less user-friendly and geometric than ours. To give the flavour, we reproduce some of the calculations from Example~\ref{exa:hex}, omitting for brevity the information on the degree in the $\Cl(X)$-grading of each $S$-module generator\footnote{Macaulay2 require the $\Cl(X)$-degree information in order to create the chain complex $\texttt{T}$, so for convenience we include the complete M2 commands at the end of the latex source file.}.

\medskip

\begin{verbatim}S = QQ[x_1,x_2,x_3,x_4,x_5,x_6,x_7];
d1 = matrix{{x_1*x_6,x_1*x_2*x_7,x_2*x_3,x_3*x_4,x_4*x_5*x_7,x_5*x_6}}
d2 = matrix{{-x_2*x_7,0,0,0,0,-x_5*x_7},{x_6,x_3,0,0,0,0}, 
             {0,-x_1*x_7,x_4*x_7,0,0,0},{0,0,-x_2*x_7,-x_5*x_7,0,0},  
             {0,0,0,x_3,x_6,0},{0,0,0,0,-x_4*x_7,x_1*x_7}}
d3 = matrix{ {-x_3*x_4*x_5},{x_4*x_5*x_6},{x_1*x_5*x_6},{-x_1*x_2*x_6},
             {x_1*x_2*x_3},{x_2*x_3*x_4}}
T = chainComplex(d1,d2,d3)
\end{verbatim}

The minimal generators $\{\gbf{\beta}_j \mid j\in\{1,\dots, 15\}\setminus \{9,10,12,13\}\}$ can be obtained using 
\begin{verbatim}
ker d1
\end{verbatim}
though Macaulay2 chooses an order on these generators that differs from ours. To obtain the cohomology sheaf $H^{-k}(T^\bullet)$ we compute the $k$th cohomology of $\texttt{T}$ and saturate by the irrelevant ideal. For example,  the commands
\begin{verbatim}
B = ideal(x_3*x_4*x_5*x_6,x_2*x_3*x_4*x_7,x_2*x_3*x_4*x_6,x_1*x_5*x_6*x_7, 
           x_1*x_3*x_5*x_6,x_1*x_2*x_3*x_6 )
H0 = prune HH_0(T)
prune (H0/ saturate(0_S*H0,B)) \end{verbatim}
show that $H^{0}(T^\bullet)\cong 0$. Similarly $H^{-2}(T^\bullet)=0$. As for the filtration on $H^{-1}(T^\bullet)$, we input the submodules $F^k$ by hand and compute the quotients, for example,
\begin{verbatim}
F2=image matrix{{-x_2*x_7,0,0,0,0,-x_5*x_7}, {x_6,x_3,0,0,0,0}, 
           {0,-x_1*x_7,x_4*x_7,0,0,0},{0,0,-x_2*x_7,-x_5*x_7,0,0},
           {0,0,0,x_3,x_6,0},{0,0,0,0,-x_4*x_7,x_1*x_7}}

F3=image matrix{ {-x_2*x_7,0,0,0,0,-x_5*x_7}, {x_6,x_3,0,0,0,0},
           {0,-x_1*x_7,x_4,0,0,0}, {0,0,-x_2,-x_5*x_7,0,0},
           {0,0,0,x_3,x_6,0},{0,0,0,0,-x_4*x_7,x_1*x_7}}
Q3 = F3/F2
prune Q3
\end{verbatim}
\noindent In this case, the output is 
\begin{verbatim}
cokernel | x_7 |
\end{verbatim}
so we reproduce our result that $F^3/F^2$ is supported on the divisor $E_7$.  Similar, input
\begin{verbatim}
F15=image matrix{ 
         {-x_2*x_7,0,0,0,0,x_5,-x_2*x_3,-x_3*x_4,-x_4*x_5*x_7,0,0,0,0,0,0},
         {x_6,-x_3,0,0,0,0,0,0,0,-x_3*x_4,-x_4*x_5,-x_5*x_6,0,0,0},
         {0,x_1*x_7,-x_4,0,0,0,x_1*x_6,0,0,0,0,0,-x_4*x_5*x_7,-x_5*x_6,0}, 
         {0,0,x_2,-x_5*x_7,0,0,0,x_1*x_6,0,x_1*x_2*x_7,0,0,0,0,-x_5*x_6}, 
         {0,0,0,x_3,-x_6,0,0,0,x_1*x_6,0,x_1*x_2,0,x_2*x_3,0,0}, 
         {0,0,0,0,x_4*x_7,-x_1,0,0,0,0,0,x_1*x_2*x_7,0,x_2*x_3,x_3*x_4}}
\end{verbatim}
and $\texttt{F14}$ (simply delete the final column in the above), then compute
\begin{verbatim}
Q15 = F15/F14
prune Q15
\end{verbatim}
\noindent In this case, the output is 
\begin{verbatim}
cokernel | x_7 x_2 x_1 |
\end{verbatim}
This confirms our calculation from Example~\ref{exa:hex} that $F^{15}/F^{14}$ is supported on the torus-invariant point $E_1\cap E_2\cap E_7$.  
\end{remark}

\end{document}